\def\section{\@startsection{section}{1}%
  \z@{1.1\linespacing\@plus\linespacing}{.8\linespacing}%
  {\normalfont\Large\scshape\centering}}
\theoremstyle{plain}
\newtheorem*{thmA}{Theorem A}
\newtheorem*{thmB}{Theorem B}
\newtheorem*{conj*}{Root Groups Conjecture}
\newtheorem*{thm1.2}{(1.2) Theorem}
\newtheorem*{thm1.3}{(1.3) Theorem}
\newtheorem*{thm1.4}{(1.4) Theorem}
\newtheorem*{prop*}{Proposition}
\newtheorem*{thm*}{Theorem}
\def\eroman{\etype{\roman}}
\newtheorem{prop}{Proposition}[section]
\newtheorem{thm}[prop]{Theorem}
\newtheorem{cor}[prop]{Corollary}
\newtheorem{lemma}[prop]{Lemma}
\theoremstyle{definition}
\newtheorem{Def}[prop]{Definition}
\newtheorem*{Def*}{Definition}
\newtheorem{Defs}[prop]{Definitions}
\newtheorem{examples}[prop]{Examples}
\newtheorem{notation}[prop]{Notation}
\newtheorem*{notation*}{Notation}
\newtheorem{remark}[prop]{Remark}
\newcommand{\etype}[1]{\renewcommand{\labelenumi}{(#1{enumi})}}
\newcommand{\la}{\lambda}
\newcommand{\ff}{F}
\newcommand{\zz}{\mathbb{Z}}
\newcommand{\ga}{\alpha}
\newcommand{\gb}{\beta}
\newcommand{\gc}{\gamma}
\newcommand{\gd}{\delta}
\newcommand{\gre}{\epsilon}
\newcommand{\gl}{\lambda}
\newcommand{\gvp}{\varphi}
\newcommand{\gr}{\rho}
\newcommand{\gs}{\sigma}
\newcommand{\gt}{\tau}
\newcommand{\charc}{{\rm char}}
\newcommand{\half}{\textstyle{\frac{1}{2}}}
\numberwithin{equation}{section}
\begin{document}
\title[Axes of Jordan type]{Axes of Jordan type in non-commutative algebras}
\author[Louis Rowen, Yoav Segev]
{Louis Rowen\qquad Yoav Segev}

\address{Louis Rowen\\
         Department of Mathematics\\
         Bar-Ilan University\\
         Ramat Gan\\
         Israel}
\email{rowen@math.biu.ac.il}
\address{Yoav Segev \\
         Department of Mathematics \\
         Ben-Gurion University \\
         Beer-Sheva 84105 \\
         Israel}
\email{yoavs@math.bgu.ac.il}
\thanks{$^*$The first author was supported by the Israel Science Foundation grant 1623/16}

\keywords{axis, flexible algebra, power-associative, fusion,
idempotent,
 non-commutative Jordan} \subjclass[2010]{Primary: 17A05, 17A15, 17A20 ;
 Secondary: 17A36,
17C27}

 \begin{abstract}
The Peirce decomposition of a Jordan algebra with respect to an
idempotent is well known.  This decomposition was taken one step
further and generalized recently by Hall, Rehren and Shpectorov, with
their introduction of {\it axial algebras}, and in particular {\it primitive axial
algebras of Jordan type} (PJs for short). It turns out that these notions are closely
related to $3$-transposition groups and vertex operator algebras. De
Medts, Peacock, Shpectorov, and M. Van Couwenberghe generalized axial algebras
to {\it decomposition algebras} which, in particular, are not
necessarily commutative. This paper deals with
decomposition algebras which are
non-commutative versions of PJs.
\end{abstract}

\date{\today}
\maketitle

\section{Introduction}
Axial algebras were introduced recently by Hall, Rehren and
Shpectorov in \cite{HRS}. These algebras are of interest because of
their connection with group theory and with vertex operator
algebras. We refer the reader to the introduction of \cite{HRS} for
further information. This notion was put in the more general
(non-commutative) framework of decomposition algebras in
\cite[Definitions~4.1 and ~5.1]{DPSC}.

This paper studies a certain class of decomposition algebras,
extending the notion of ``primitive axial algebras of Jordan type''
(see \cite{HRS}) to the non-commutative setting. Thus in this paper
$A$ is an algebra not necessarily associative or commutative, not
necessarily with a multiplicative unit element, over a field $F$
with $\charc(F)\ne 2,$ unless stated otherwise. As in \cite{HRS} we
are interested in such algebras generated by certain kind of axes.
 These are idempotents,
as defined in Definition \ref{defs main3}. As in \cite{Sa} and \cite{HRS},
most of our efforts are devoted to classifying our $2$-generated algebras.
See Theorem A and B below.

An additional underlying motivation in studying axes is to understand the
``reason" that a primitive axial algebra of Jordan type
$\eta\ne1/2,$ which is generated (as an algebra) by a finite set of
$m$ axes, is finite dimensional. This fact follows from the
classification of finite simple groups, since such algebras, in the
commutative setting, come from $3$-transposition groups.  For
$\eta=1/2$ and $m>3,$ it is not known whether this is true or not.
But this goal may lie in the not-so-near future.
This will be further pursued in \cite{RS}.

\begin{Defs}\label{defs main20}$ $
\begin{enumerate}
\item  The algebra $A$ is {\it flexible} if it
satisfies the identity $(xy)x=x(yx).$

Note that
commutative algebras are flexible. Indeed
\[
(xy)x=(yx)x=x(yx).
\]

\item
We often denote multiplication $x\cdot y$ in~$A$ by
juxtaposition: $x y$.

\item
The commutative center of $A$ is denoted $Z(A)$ and defined
as
\[
Z(A)=\{x\in A\mid xy=yx, \text{ for all }y\in A\}.
\]

\item
We define {\it left and right} multiplication maps
$L_a(b) := a\cdot b $ and $R_a(b) := b\cdot a.$

\item
We write $A_{\la}(X_a)$ for the eigenspace
of $\la\in\ff$ with respect to the transformation $X_a$,
$X\in \{L,R\},$ i.e., $A_{\la}(L_a) = \{ v \in A: a\cdot v = \la v\},$
and similarly for $A_{\la}(R_a).$
{\bf Often we just write $A_\la$ for
$A_{\gl}(L_a)$, when $a$ is understood.}

\item
We denote: $A_{\gl,\gd}(a):= A_{\gl}(L_a)\cap A_{\gd}(R_a).$ An
element in $A_{\gl,\gd }(a)$ will be called a $(\gl,\gd)$-{\it
eigenvector} of $a,$  and $(\gl,\delta)$ will be called its {\it
eigenvalue}.
{\bf We just write $A_{\gl,\gd }$ when the idempotent $a$ is  understood.}
\end{enumerate}
\end{Defs}

\begin{Def}\label{defs main3}$ $
\smallskip
Let $a\in A$ be an idempotent, and $\gl,\gd\notin\{0,1\}$ in $\ff$.
\begin{enumerate}
\item
 $a$ is a {\it left axis of type $\gl$} if
\begin{itemize}
\item[(a)]
$a$ is {\it absolutely left primitive}, that is, $A_1(L_a) = \ff a$.

\item[(b)]
$(L_a-\gl)(L_a-1)L_a =0.$

\item[(c)]
The direct sum decomposition of
$A$ into eigenspaces of $L_a$ is a $\zz_2$-grading:
\[
A=\overbrace{A_{0}\oplus A_1}^{\text {$+$-part}}\oplus
\overbrace{A_{\gl}}^{\text{$-$-part}},
\]
recalling that $A_{\gl}$ means $A_{\gl}(L_a)$.
\end{itemize}

\item
A {\it right axis of type $\gl$} is defined similarly.

\item
$a$ is an {\it axis (2-sided) of type $(\gl,\gd)$} if $a$ is a left
axis of  type $\gl$ and a    right axis of type $\gd$ and, in
addition,
 $L_a R_a =  R_a  L_a.$
Thus $A_{1,1} =\ff a =A_1$; in particular $A_{1,\gd} = A_{\gl,1} =
0$. Hence $A$ decomposes into a
direct sum
\[
A=\overbrace{A_{1,1} \oplus A_{0,0}}^{\text {$++$-part}}\oplus
\overbrace{A_{0,\gd}}^{\text{$+-$-part}} \oplus \overbrace{
A_{\la,0}}^{\text {$-+$-part}}\oplus \overbrace{A_{\gl,\gd
}}^{\text{$--$-part}},
\]
and this is $\zz_2\times\zz_2$ grading of $A$ (multiplication $(\gre,\gre')(\gr,\gr')$
is defined in the obvious way for $\gre,\gre',\gr,\gr'\in\{+,-\}$),

\item
The axis $a$  is of {\it Jordan type $(\gl,\gd)$} if
$A_{\gl,0} = A_{0,\gl}=0.$

In this case,
\[
A=\overbrace{A_{1,1} \oplus A_{0,0}}^{\text {$+$-part}}\oplus
\overbrace{A_{\gl,\gd }}^{\text{$-$-part}},
\]
is a $\zz_2$-grading of $A.$
%
%
\end{enumerate}
\end{Def}

\noindent
See Examples \ref{flex} for non-commutative
algebras generated by two axes of Jordan type.

Note that besides being non-commutative,   Definition \ref{defs
main3}(1c) {\bf does not} require the condition that $A_{0}$ be a
subalgebra, contrary to the usual hypothesis in the
theory of primitive axial algebras of Jordan type. But this condition does not seem
to pertain to any of the proofs.

We can now state our main results.

\begin{thmA}[Theorem \ref{thm dim 2}]\label{thm dim2}
Suppose that $\dim(A)=2$ and $A$ is generated
by two axes $a$ and $b.$  Then either $A$ is one of the
commutative algebras described in \cite[Lemma 3.1.2, p.~269]{HSS},
or $A$ is the algebra of Example~\ref{flex}(i).
In particular, $A$ is flexible.
\end{thmA}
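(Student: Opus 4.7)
Since the only scalar multiples of a nonzero idempotent $a$ that are themselves idempotent are $0$ and $a$, the axes $a$ and $b$ are linearly independent, and $\{a,b\}$ is a basis of $A$. Let $a$ have type $(\gl,\gd)$ and $b$ have type $(\gl',\gd')$. Using Definition~\ref{defs main3}(3), I would expand
\[
b=\alpha a+c,\qquad a=\alpha'b+c',
\]
with $c\in A_{\gm,\gn}(a)\smallsetminus\{0\}$ and $c'\in A_{\gm',\gn'}(b)\smallsetminus\{0\}$, where $(\gm,\gn)\in\{(0,0),(0,\gd),(\gl,0),(\gl,\gd)\}$ and similarly for $(\gm',\gn')$. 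Computing $ab=L_a(b)$, $ba=R_a(b)$ from the first expansion and $ab=R_b(a)$, $ba=L_b(a)$ from the second, and comparing coefficients in $\{a,b\}$, yields the relations
\[
\alpha(1-\gm)=\gn',\quad \gm=\alpha'(1-\gn'),\quad \alpha(1-\gn)=\gm',\quad \gn=\alpha'(1-\gm'),
\]
which, combined with $b^2=b$ and the $\zz_2\times\zz_2$-grading of $a$, express the entire multiplication table in a few scalar parameters.

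Next I would rule out the ``mixed'' cases $(\gm,\gn)\in\{(0,\gd),(\gl,0)\}$. If, say, $(\gm,\gn)=(0,\gd)$, then $c$ lies in the $+-$ piece of the grading, so $c^2\in\ff a\oplus A_{0,0}=\ff a$; the idempotency $b^2=b$ then forces $\alpha=1/\gd$ and pins down $c^2$. Writing the matrices of $L_b,R_b$ in the basis $\{a,b\}$ and comparing $L_bR_b$ with $R_bL_b$ leaves a single obstruction, $1=\gd$, contradicting $\gd\ne 1$. Symmetrically $(\gm,\gn)=(\gl,0)$ is excluded, and by the same reasoning applied to $b$ we may take $(\gm',\gn')\in\{(0,0),(\gl',\gd')\}$ as well.

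If $(\gm,\gn)=(0,0)$ then $ab=ba=\alpha a$, $A$ is commutative, and symmetric reasoning forces $\alpha=\alpha'=0$, yielding a direct sum of two orthogonal idempotents from \cite[Lemma~3.1.2, p.~269]{HSS}. If $(\gm,\gn)=(\gl,\gd)$ then $b^2=b$ forces $\alpha(\gl+\gd)=1$, and the sub-case $(\gm',\gn')=(0,0)$ again reduces to a commutative HSS algebra. In the remaining sub-case $(\gm',\gn')=(\gl',\gd')$, the four relations together with $\alpha'(\gl'+\gd')=1$ collapse to the quadratic $(2\alpha+1)(\alpha-1)=0$, i.e., $\gl+\gd\in\{1,-2\}$. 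Requiring $c'=a-\alpha'b$ to be a simultaneous eigenvector of $L_b,R_b$ excludes $\gl+\gd=-2$ unless $\gl=\gd=-1$ (commutative, HSS), while $\gl+\gd=1$ reproduces the multiplication of Example~\ref{flex}(i). Finally, flexibility $(xy)x=x(yx)$ reduces, in characteristic $\ne 2$, to the basis identities $(ab)a=a(ba)$, $(ba)b=b(ab)$ and the linearized flexibility identity on basis triples; the first two are immediate from $L_aR_a=R_aL_a$ and $L_bR_b=R_bL_b$, and the linearized identity is verified directly using $\gl+\gd=1$ (or commutativity). The main obstacle is the mixed-case exclusion, where the grading axiom for $a$ and the commuting relation for $b$ must be used in tandem.
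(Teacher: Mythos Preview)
Your argument is correct and follows the same overall strategy as the paper's proof: reduce everything to a handful of scalar relations via the simultaneous eigenspace decomposition with respect to $a$ (and $b$), then solve. The execution differs in two places. First, for the degenerate eigenspaces the paper invokes Lemma~\ref{lem ab notin ffa} (so the fusion rules for $b$) to force $ab=0$ and reach $2B$, whereas you exclude the mixed pieces $(0,\gd)$ and $(\gl,0)$ via the commutation $[L_b,R_b]=0$; both work, and yours is a pleasant alternative. Second, in the main case the paper writes down the symmetric system (2.1)--(2.6) and eliminates to get $\gl=\gl'$, $\gd=\gd'$, and then $\gl+\gd=1$ or $\gl=\gd\in\{-1,\half\}$; you instead pass to the sums $u=\gl+\gd$, $v=\gl'+\gd'$, obtain $u=v$ and the quadratic $(u-1)(u+2)=0$, and then use the individual relations to pin down $\gl=\gd=-1$ in the $u=-2$ branch. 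The two computations are equivalent.

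Two small points deserve an extra line in your write-up. In the case $(\gm,\gn)=(0,0)$, ``symmetric reasoning forces $\alpha=\alpha'=0$'' is too quick: from $ab=ba=\alpha a$ one only gets $\alpha=\gm'=\gn'$, and the possibility $\alpha=\gl'=\gd'\ne 0$ must be excluded by noting that then $a\in A_{\gl',\gd'}(b)$, whence $a=a^2\in\ff b$ by fusion, a contradiction. And the sub-case $(\gm,\gn)=(\gl,\gd)$, $(\gm',\gn')=(0,0)$ does not merely ``reduce to a commutative HSS algebra'': it is impossible, since your relations give $\alpha(1-\gl)=\gn'=0$, hence $\alpha=0$, contradicting $\alpha(\gl+\gd)=1$.
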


\begin{thmB}[Theorem \ref{thm 2-gen}]\label{thm dim3}
Suppose $A$ is generated by two axes of Jordan type.
Then either $A$ is a primitive axial algebra of Jordan type
as in \cite{HRS} (in particular $A$ is commutative),
or $A$ is as in Examples~\ref{flex}.  In particular
$\dim(A)\le 3,$ and $A$ is flexible.
\end{thmB}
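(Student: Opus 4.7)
The strategy is threefold: exhibit a spanning set of size at most three, show it is a subalgebra, then classify by commutativity.

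First, using the Jordan-type decomposition from the axis $a$,
\[
A=\ff a\oplus A_{0,0}(a)\oplus A_{\gl_a,\gd_a}(a),
\]
I write $b=\ga\, a+b_0+b_-$ with $b_0\in A_{0,0}(a)$ and $b_-\in A_{\gl_a,\gd_a}(a)$. Applying $L_a$ and $R_a$ to $b$ and recalling that $\gl_a,\gd_a\notin\{0,1\}$ gives
\[
ab=\ga\, a+\gl_a b_-,\qquad ba=\ga\, a+\gd_a b_-,
\]
which solves for $b_-$, then $b_0$, then $ba$ as linear combinations of $\{a,b,ab\}$. Hence $V:=\ff a+\ff b+\ff(ab)=\ff a+\ff b_0+\ff b_-$ has dimension at most $3$ and contains $\{a,b,ab,ba,b_0,b_-\}$. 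The symmetric decomposition of $a$ in the $b$-grading puts the analogous $a_0,a_-$ in $V$ as well, so $V$ carries the $\zz_2$-gradings from both axes simultaneously.

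The central step is to prove that $V$ is a subalgebra, whence $A=V$. The only non-obvious products are $b_0^2,b_-^2$ (which lie in the $a$-even part $\ff a\oplus A_{0,0}(a)$) and $b_0 b_-,b_- b_0$ (which lie in the $a$-odd part $A_{\gl_a,\gd_a}(a)$). Each such product is also $b$-homogeneous, and the intersection of its $a$- and $b$-homogeneous constraints inside the small space $V$, combined with the absolute left primitivity conditions $A_{1,1}(a)=\ff a$ and $A_{1,1}(b)=\ff b$, pins it into $V$. This is the essential use of the Jordan type hypothesis: it collapses the would-be $\zz_2\times\zz_2$ grading of Definition~\ref{defs main3}(3) to the $\zz_2$ grading of Definition~\ref{defs main3}(4), leaving each bi-homogeneous product with no room to escape $V$. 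This closure verification is the main obstacle of the proof and requires individually treating each of the four non-trivial products.

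With $A=V$ of dimension at most $3$, the classification is finished by cases. If $A$ is commutative, then $(A,a,b)$ is a $2$-generated primitive axial algebra of Jordan type in the sense of \cite{HRS}, and the classification of such algebras in \cite[Lemma~3.1.2]{HSS} yields the first alternative. Otherwise $ab\neq ba$, and the identity
\[
ba=\tfrac{\gd_a}{\gl_a}\,ab+\ga\Bigl(1-\tfrac{\gd_a}{\gl_a}\Bigr)a,
\]
together with its counterpart from the $b$-decomposition of $a$, determines the structure constants of $A$ in the basis $\{a,b,ab\}$; matching these against the multiplication tables of Examples~\ref{flex} identifies $A$ as one of them. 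Flexibility is automatic in the commutative case, and for the non-commutative examples reduces, via polarization, to a finite multilinear check on the spanning set $\{a,b,ab\}$.
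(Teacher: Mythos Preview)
Your outline has two real gaps.

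\textbf{Closure of $V$.} The claim that the products $b_0^2$, $b_-^2$, $b_0b_-$, $b_-b_0$ are ``also $b$-homogeneous'' is not correct: $b_0$ and $b_-$ are the components of $b$ in the $a$-grading and have no reason to be homogeneous for the $b$-grading. For instance $b_- = \tfrac{1}{\gl}(ab-\ga\, a)$ involves $a$, which itself splits non-trivially as $\ga_a b + a_0 + a_-$ in the $b$-grading, so $b_-$ is a genuine mixture of $b$-types. Consequently your ``intersection of bi-homogeneous constraints'' does not pin $b_-^2$ (equivalently $(ab)(ab)$) inside $V$; knowing only that $b_-^2\in\ff a\oplus A_{0,0}(a)$ says nothing about membership in the three-dimensional $V$, since $A_{0,0}(a)$ is not yet known to be one-dimensional. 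The paper closes $V$ by a different device: it sets $\gs := ab - \gl' a - \gl b$ and checks directly that $\gs \in \ff a + A_{0,0}(a)$ \emph{and} $\gs \in \ff b + A_{0,0}(b)$, so that $a\gs,\gs a\in\ff a$ and $b\gs,\gs b\in\ff b$. A non-commutative Seress lemma (if $y\in\ff a + A_0(L_a)$ then $a(xy)\in (ax)y+\ff a$, and symmetrically on the right) then yields $(ab)\gs\in\ff(ab)+\ff a$, hence $\gs^2\in V$, and closure follows. The key is that it is $\gs$, not $b_0$ or $b_-$, that lies in the even part for \emph{both} axes simultaneously.

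\textbf{The commutative case.} Asserting that ``$(A,a,b)$ is a PJ as in \cite{HRS}'' and citing \cite[Lemma~3.1.2]{HSS} does not finish the job. First, that lemma only treats $\dim A=2$. Second, and more seriously, a PJ in \cite{HRS} is generated by axes of a \emph{common} Jordan type~$\eta$, whereas here $a$ has type $(\gl,\gl)$ and $b$ has type $(\gl',\gl')$ with $\gl\ne\gl'$ a priori allowed; nothing in your sketch forces $\gl=\gl'$ or supplies an alternative generating pair of equal type. The paper does this with a genuine extra argument in dimension~$3$: it shows $\gs^2=\gc\gs$ for an explicit scalar $\gc$, proves that $\gc=0$ already forces $\gl=\gl'$, and when $\gc\ne 0$ observes that ${\bf 1}:=\tfrac{1}{\gc}\gs$ is a unit, so that ${\bf 1}-a$ is an axis of type $(1-\gl,1-\gl)$; a short Miyamoto-involution argument on the pairs $(a,a^{\gt_b})$ and $({\bf 1}-a,b)$ then produces two axes of the same type generating $A$. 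This reduction is the substance of the commutative $3$-dimensional case and is missing from your proposal.
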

\medskip

\noindent This extends the characterization of $2$-generated
primitive axial algebras of Jordan type in \cite{HRS}. We note also
that although our fusion rules do not require that
$A_0(a)^2\subseteq A_0(a)$, for an axis $a,$ this holds if $A$ is
generated by two axes of Jordan type (see Corollary \ref{co 020}).

\section{Algebras generated by two axes}\label{nc1}
In this section we give some basic properties
of axes.  In Theorem \ref{thm dim 2} we classify
the algebras of dimension $2$ generated by two axes and
in Theorem \ref{thm 2-gen} we classify the algebras
generated by two axes of Jordan type.

\begin{notation}\label{not00}
Let $a$ be an axis of type $(\gl,\gd).$ Given $y\in A,$ we write  $y= \ga_y  a+y_0+ y_{\gl},$ where $y _\rho \in A_{\rho}(L_a),$
$\rho \in \{0,\gl\}$, and $\ga \in \ff$. In this notation,
$y_0 = y_{0,0}+y_{0,\gd},$ and $y_\gl =
y_{\gl,0}+y_{\gl,\gd}.$ If $a$ is of  Jordan type, then
$y_0 = y_{0,0}$ and $y_\gl = y_{\gl,\gd}.$

Similarly, we write $y=\ga_y a+{}_{0}{y}+{}_{\gd}y,$ where ${}_{\gr}{y}\in A_{\gr}(R_a).$
\end{notation}

\begin{remark}\label{gl=gd}
If $a\in A$ is an axis such that $A=A_{1,1}+A_{0,0},$ then
our convention is that $a$ is an axis of Jordan type $(\gl,\gl),$
for any $\gl\in\ff,$ with $\gl\notin\{0,1\}.$
\end{remark}

Here are some basic observations.

\begin{lemma}\label{lem basic}$ $
Suppose  $a$ is an axis of type $(\gl,\gd),$ and let $y\in A.$
\begin{enumerate}
\item
$a(\ff a+A_0)= \ff a .$

\item
$ay=\ga_y  a+\gl y_{\gl},$ implying
$y_{\gl} =\frac 1 {\gl}(ay -\ga_y  a).$

\item
$a(ay)=\ga_y (1-\gl) a+\gl ay.$

\item
If $a$ is of Jordan type, then $ya\in \ff a+\ff ay.$

\item
An axis $a$ of Jordan type $(\gl,\gd)$ is in $Z(A),$ iff $\gl=\gd.$
\end{enumerate}
\end{lemma}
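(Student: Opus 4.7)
The unifying idea is that every claim is really a bookkeeping statement about the eigenspace decomposition $A = \ff a\oplus A_0(L_a)\oplus A_\gl(L_a)$ and, in the Jordan case, the refined decomposition $A = \ff a\oplus A_{0,0}\oplus A_{\gl,\gd}$. So my plan is to write $y=\ga_y a+y_0+y_\gl$ as in Notation~\ref{not00} and then simply apply $L_a$ (and later $R_a$) term by term.

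For (1), observe that $L_a a = a$ and $L_a$ annihilates $A_0$, so $a(\ff a + A_0) = \ff a$. For (2), apply $L_a$ to $y=\ga_y a+y_0+y_\gl$ piece by piece to get $ay=\ga_y a+\gl y_\gl$, and then solve for $y_\gl$. For (3), apply $L_a$ a second time to the formula from (2): $a(ay)=\ga_y a+\gl^2 y_\gl$, and rewrite the right side as $\ga_y(1-\gl)a+\gl(\ga_y a+\gl y_\gl)=\ga_y(1-\gl)a+\gl ay$.

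For (4), I invoke Jordan type to replace $y_0$ by $y_{0,0}$ and $y_\gl$ by $y_{\gl,\gd}$, and now compute $ya=R_a(y)$ the same way: the $\ff a$ summand contributes $\ga_y a$, the $A_{0,0}$ summand vanishes, and the $A_{\gl,\gd}$ summand contributes $\gd y_\gl$. Combined with (2) to eliminate $y_\gl=\gl^{-1}(ay-\ga_y a)$, this yields
\[
ya=\ga_y\!\left(1-\tfrac{\gd}{\gl}\right)a+\tfrac{\gd}{\gl}\,ay\in\ff a+\ff ay,
\]
which is exactly (4).

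For (5), the ``if'' direction is immediate from (2) and the computation in (4): when $\gl=\gd$ we get $ay=ya=\ga_y a+\gl y_\gl$ for every $y$. For the ``only if'' direction, the natural move is to test $a\in Z(A)$ on a vector $y\in A_{\gl,\gd}$, which forces $\gl y=\gd y$, hence $\gl=\gd$ whenever $A_{\gl,\gd}\ne 0$. The one genuinely delicate point — and the only real obstacle in the lemma — is the degenerate case $A_{\gl,\gd}=0$, in which the chosen $\gd$ is not constrained by the algebra; this is handled by appealing to the convention in Remark~\ref{gl=gd}, which allows us to take $\gd=\gl$ as the type in that situation.
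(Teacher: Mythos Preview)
Your proof is correct and follows essentially the same route as the paper: decompose $y=\ga_y a+y_0+y_\gl$, apply $L_a$ (and then $R_a$ in the Jordan case) term by term, and for (5) test centrality on a nonzero vector of $A_{\gl,\gd}$ while handling the degenerate case $A_{\gl,\gd}=0$ via Remark~\ref{gl=gd}. Your explicit formula $ya=\ga_y(1-\gd/\gl)a+(\gd/\gl)\,ay$ in (4) is a slight refinement of the paper's statement, but the argument is identical.
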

\begin{proof}
(1) This is obvious.

(2) Apply $L_a$ to  $y$ (and use Notation~\ref{not00}).

(3) $ a(ay)=\ga_y  a+\gl^2 y_{\gl}=\ga_y  a+\gl \textstyle{(ay-\ga_y
a)},$  yielding (3).

(4) By (2), $y_{\gl,\gd}\in \ff a+ \ff ay,$ and
$ya=\ga_ya+\gd y_{\gl,\gd}.$

(5) Suppose that $a\in Z(A).$  If $A_{\gl,\gd}=0,$ then, by Remark \ref{gl=gd}, $\gl=\gd.$
Otherwise let  $0\ne x_{\gl,\gd}\in A_{\gl,\gd }.$
Then $\gl x=ax=xa=\gd x,$ so $\gl=\gd.$
Conversely, for $y\in A.$
Then $ay=\ga a+\gl x_{\gl,\gl}=xa,$ so $a\in Z(A).$
\end{proof}

\begin{lemma}\label{lem ab notin ffa}$ $
\begin{enumerate}\eroman
\item $ab \notin \ff a$ or $ab =0$, for any   idempotent $a$ and a  right axis $b$.

\item
$ba \notin \ff a$ or $ba =0$, for any   idempotent $a$ and a left axis $b$.

\item For any left axis $a$ of type
$\gl$ and any   right axis $b$, either $ab =0$ or
$b_{\gl}\ne 0.$

\item If $a$ is of Jordan type $({\gl,\gd})$ with $\gl\ne \gd,$
and $b$ is an axis, then $b_{\gl,\gd}^2 = 0.$
\end{enumerate}
\end{lemma}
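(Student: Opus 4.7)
My plan is to handle (i)--(iii) as bookkeeping in the eigenspace decomposition granted by the axis hypothesis on $b$, and to attack (iv) by applying the axis commutativity $L_bR_b=R_bL_b$ to $a$ and trapping $b_{\gl,\gd}^2$ between two transverse graded summands of $A$.

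For (i), I will suppose $ab=\beta a$ for some $\beta\in\ff$ and decompose $a$ along the three $R_b$-eigenspaces in the style of Notation \ref{not00}: $a=\alpha_a b+{}_0a+{}_\gl a$. Then $ab=R_b(a)=\alpha_ab+\gl({}_\gl a)$, and equating with $\beta a=\beta\alpha_ab+\beta({}_0a)+\beta({}_\gl a)$ componentwise yields $\beta\alpha_a=\alpha_a$, $\beta({}_0a)=0$, $(\beta-\gl)({}_\gl a)=0$. Either $\beta=0$ (whence $ab=0$), or ${}_0a=0$ and $\beta\in\{\gl,1\}$. The case $\beta=\gl$ puts $a\in A_\gl(R_b)$, and combining $a=a^2$ with the $\zz_2$-grading $A_\gl(R_b)\cdot A_\gl(R_b)\subseteq A_0(R_b)\oplus A_1(R_b)$ collapses $a=0$, a contradiction. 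The case $\beta=1$ forces $a=\alpha_a b$ and then $a^2=a$ gives $a=b$, the degenerate case tacitly excluded. Part (ii) is the symmetric argument with $L_b$-eigenspaces. Part (iii) then drops out of (i): writing $b=\alpha_ba+b_0+b_\gl$ in the $L_a$-decomposition and invoking Lemma \ref{lem basic}(2), $ab=\alpha_ba+\gl b_\gl$, so $b_\gl=0$ forces $ab\in\ff a$, whence (i) gives $ab=0$.

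The substance is (iv). Because $a$ is of Jordan type I can write $b=\alpha_ba+b_{0,0}+b_{\gl,\gd}$, and by Lemma \ref{lem basic}(2) and its right analogue, $ab=\alpha_ba+\gl b_{\gl,\gd}$ and $ba=\alpha_ba+\gd b_{\gl,\gd}$. I apply $L_bR_b=R_bL_b$ to $a$, giving $b(ab)=(ba)b$, and expand both sides by distributing $b=\alpha_ba+b_{0,0}+b_{\gl,\gd}$ and using $ab_{0,0}=b_{0,0}a=0$, $ab_{\gl,\gd}=\gl b_{\gl,\gd}$, $b_{\gl,\gd}a=\gd b_{\gl,\gd}$. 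The $\alpha_b^2a$ terms will cancel, and after collecting the $b_{\gl,\gd}$-coefficient via $(\gd+\gl^2)-(\gl+\gd^2)=(\gl-\gd)(\gl+\gd-1)$, I expect to isolate the identity
\[
(\gl-\gd)\,b_{\gl,\gd}^2 \;=\; \alpha_b(\gl-\gd)(1-\gl-\gd)\,b_{\gl,\gd}\;+\;\gd\, b_{\gl,\gd}b_{0,0}\;-\;\gl\, b_{0,0}b_{\gl,\gd}.
\]
The payoff is a grading dichotomy: every summand on the right lies in $A_{\gl,\gd}$ (the linear term by definition, and the cross terms via the $\zz_2\times\zz_2$-grading rule $(+,+)\cdot(-,-)\subseteq(-,-)$), while the left side lies in $A_{1,1}\oplus A_{0,0}$ via the $\zz_2$-grading rule $A_{\gl,\gd}\cdot A_{\gl,\gd}\subseteq A_{1,1}\oplus A_{0,0}$. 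Since $\gl\ne\gd$ I may divide by $\gl-\gd$ and conclude that $b_{\gl,\gd}^2\in A_{\gl,\gd}\cap(A_{1,1}\oplus A_{0,0})=0$. The only real obstacle is the expansion itself; a single mis-collected cross term would obscure the crucial factor $\gl-\gd$ without which no division is possible.
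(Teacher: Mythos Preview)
Your argument is correct and follows the paper's approach closely. For (i)--(ii) the paper argues slightly more compactly---observing that $ab=\gc a$ makes $a$ an $R_b$-eigenvector, ruling out $\gc=1$ by absolute primitivity, and then using the $\zz_2$-grading to get $a=a^2\in\ff b\oplus A_0(R_b)$, whence $ab\in\ff a\cap\ff b=0$---but your componentwise case analysis is equivalent; for (iv) the paper performs the identical expansion of $b(ab)=(ba)b$ and simply projects both sides onto the $+$-part $\ff a\oplus A_{0,0}$ to read off $\gl\, b_{\gl,\gd}^2=\gd\, b_{\gl,\gd}^2$, which is exactly your grading dichotomy phrased as a projection rather than a rearrangement.
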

\begin{proof}
(i)\&(ii) If $ab = \gc a,$ then $a$ is an eigenvector of $R_b,$
with eigenvalue $\gc.$  If $\gc=1,$ then $b$ is not absolutely primitive,
a contradiction.  Otherwise, by the fusion rules for $b,$
$a=a^2\in\ff b+A_0(b).$  But then $ab\in\ff b.$ This together with $ab\in\ff a,$
forces $ab=0.$  The proof of part (ii) is the same.

(iii) If $b_{\gl}= 0,$ then $ab = \ga_b a,$ so we are done by (i).

(iv)  Recall that $L_bR_b=R_bL_b.$ We compute $b(ab)=(ba)b.$
\[
\begin{aligned}
b(ab) = b(\ga_b a+ \gl b_{\gl,\gd} )
& =  \ga_b ba + \gl b b_{\gl,\gd}\\
& =\ga_b (\ga_b a+ \gd b_{\gl,\gd}) +\gl ( b_{\gl,\gd}
+ (b_0 + \ga_b a)) b_{\gl,\gd}\\
& =\ga_b ^2 a +\gl b_{\gl,\gd}^2 +\ga_b(\gl^2 +\gd )b_{\gl,\gd}+\gl b_0  b_{\gl,\gd}.
\end{aligned}
\]
On the other hand,
\[
\begin{aligned}(ba)b &= (\ga_b a+ \gd b_{\gl,\gd}) b = \ga_b ab+\gd b_{\gl,\gd} b\\
& =  \ga_b (\ga_b a+ \gl b_{\gl,\gd} ) + \gd b_{\gl,\gd}(\ga_b a+b_0+ b_{\gl,\gd})\\
& =\ga_b^2 a + \gd b_{\gl,\gd}^2 +\ga_b( \gl +\gd ^2)b_{\gl,\gd}+ \gd
b_{\gl,\gd}b_0,
\end{aligned}
\]
so matching parts in $A_0+\ff a$
shows $\ga_b^2 a + \gl b_{\gl,\gd}^2 = \ga_b^2 a + \gd
b_{\gl,\gd}^2,$ and thus $b_{\gl,\gd}=0.$
\end{proof}

\begin{thm}\label{thm dim 2}
Suppose that $\dim(A)=2$ and $A$ is generated
by two axes $a$ and $b.$  Then either $A$ is one of the
algebras described in \cite[Lemma 3.1.2, p.~269]{HSS}
(in particular $A$ commutative)
or $A$ is the non-commutative flexible algebra of Example \ref{flex}(i).
\end{thm}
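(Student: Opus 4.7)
The plan is a case analysis based on the common eigenspace decomposition of $A$ with respect to $L_a$ and $R_a$. Since $A$ has dimension $2$ and is generated by the distinct idempotents $a$ and $b$, these must be linearly independent, so $A = \ff a \oplus \ff b$ as a vector space. Because $a$ is an axis, $L_a R_a = R_a L_a$, and $A$ decomposes into the common eigenspaces of $L_a$ and $R_a$; combining $A_{1,1}(a) = \ff a$ with the axis vanishings $A_{1,0} = A_{1,\gd} = A_{0,1} = A_{\gl,1} = 0$, exactly one of $A_{0,0}(a)$, $A_{0,\gd}(a)$, $A_{\gl,0}(a)$, $A_{\gl,\gd}(a)$ is $1$-dimensional and the other three vanish. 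I will write $b = \ga a + v$ with $0 \ne v$ in this unique non-trivial eigenspace, and then split into four cases according to where $v$ sits.

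If $v \in A_{0,0}(a)$, then $av = va = 0$, so $ab = \ga a \in \ff a$; Lemma~\ref{lem ab notin ffa}(i) forces $\ga = 0$, hence $b = v$ and $ab = ba = 0$. This is the commutative algebra of orthogonal idempotents, one of the entries of \cite[Lemma 3.1.2]{HSS}. If $v$ lies in one of the mixed eigenspaces $A_{0,\gd}(a)$ or $A_{\gl,0}(a)$, the same argument still yields $\ga = 0$ and $b = v$; but an explicit computation of the $2\times 2$ matrices of $L_b$ and $R_b$ in the basis $\{a,b\}$ shows $L_b R_b \ne R_b L_b$ (the discrepancy on $a$ equals $\pm \gd\, b$ or $\pm \gl\, b$, nonzero since $\gl,\gd \ne 0$), contradicting the axis hypothesis for $b$; so these cases do not occur. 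Suppose finally that $v \in A_{\gl,\gd}(a)$. If $\gl = \gd$, then $av = va$, so $a \in Z(A)$ (by Lemma~\ref{lem basic}(5)) and $A$ is commutative; the fusion rule $A_\gl(a) \cdot A_\gl(a) \subseteq \ff a$ together with $b^2 = b$ determine the remaining scalars, placing $A$ again in \cite[Lemma 3.1.2]{HSS}. If $\gl \ne \gd$, then $a$ is of Jordan type $(\gl,\gd)$ and the $b$-component $b_{\gl,\gd}$ equals $v$, so Lemma~\ref{lem ab notin ffa}(iv) gives $v^2 = 0$. Expanding $b^2 = b$ using $av = \gl v$ and $va = \gd v$ then yields $\ga^2 a + \ga(\gl+\gd) v = \ga a + v$, forcing $\ga^2 = \ga$ and $\ga(\gl+\gd) = 1$, so $\ga = 1$ and $\gl + \gd = 1$. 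Hence $b = a + v$ and the multiplication table matches Example~\ref{flex}(i) exactly. Flexibility is automatic in the commutative sub-cases, and in the last sub-case follows from a short direct check on the basis $\{a,v\}$ using $v^2 = 0$ and the identity $\gl(1-\gl) = \gd(1-\gd)$ arising from $\gl + \gd = 1$.

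I expect the main obstacle to be the two mixed-eigenspace sub-cases: there $b$ can still be an idempotent, so what rules them out is not the equation $b^2 = b$ but the axis requirement $L_b R_b = R_b L_b$, and this must be verified by explicit matrix arithmetic in the basis $\{a,b\}$ rather than by a conceptual fusion-rule argument. A secondary subtlety is that the crucial constraint $\gl + \gd = 1$ in the non-commutative case emerges only after combining the vanishing $v^2 = 0$ coming from Lemma~\ref{lem ab notin ffa}(iv) with idempotency of $b$; without that vanishing, $b^2 = b$ carries a spurious quadratic term which would obstruct the clean identification with Example~\ref{flex}(i).
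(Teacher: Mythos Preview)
Your argument is correct, but it is organised differently from the paper's and leans on different tools. The paper does not split on which of the four joint eigenspaces of $L_a,R_a$ is nonzero; instead it first disposes of the case $A_\gl(L_a)=0$ (getting $ab=0$ and then ruling out $A_\gd(R_a)\ne 0$ via the fusion constraint $b_\gd^2\in\ff a$, not via $L_bR_b=R_bL_b$), and in the main case $A_{\gl,\gd}(a)\ne 0\ne A_{\gl',\gd'}(b)$ it writes $ab,ba$ both with respect to $a$ and with respect to $b$, combines this with $b_{\gl,\gd}^2\in\ff a$ and $a_{\gl',\gd'}^2\in\ff b$, and solves the resulting six scalar equations to obtain $\gl=\gl'$, $\gd=\gd'$, and then $\gl=\gd\in\{-1,\half\}$ or $\gl+\gd=1$. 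Your route is shorter in the non-commutative sub-case because you invoke Lemma~\ref{lem ab notin ffa}(iv) (which has already internalised the computation $b(ab)=(ba)b$) to get $v^2=0$ immediately, and you hand the commutative sub-case off to \cite{HSS}; the paper's route is more self-contained and extracts the equalities $\gl=\gl'$, $\gd=\gd'$ and the constraint $\gl\in\{-1,\half\}$ explicitly rather than by citation. One small remark on your mixed-eigenspace cases: your contradiction via $L_bR_b\ne R_bL_b$ is valid, but a quicker one is available purely from the $\zz_2\times\zz_2$-grading for $a$: once $b=v\in A_{0,\gd}$ (resp.\ $A_{\gl,0}$), fusion forces $b^2=v^2\in\ff a$, already contradicting $b^2=b$.
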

\begin{proof}
Let $a$ be of type
$(\gl,\gd)$ and $b$ be of type $(\gl',\gd').$
Assume first that $A_{\gl}(L_a)=0.$  Then $ab=\ga_b a,$
hence $ab=0,$ by Lemma \ref{lem ab notin ffa}(i).

Suppose $A_{\gd}(R_a)\ne 0.$  Write $b=\ga_ba+b_{\gd},$ with $b_{\gd}\in A_{\gd}(R_a).$
Then, an easy computation shows that $ba=\ga_b(1-\gd)a+\gd b.$  Then, by the fusion
rules
\[
\ff a \ni b_{\gd}^2=b-\ga_b ba+\ga_b^2a=b-\gd b+\gc a, \gc\in\ff.
\]
Since $\gd\ne 1,$ this
is impossible.
Hence, as above $ba=0,$ and $A$
is the algebra $2B$ of \cite{HRS}.  Thus we may assume that $A_{\gl}(L_a)\ne 0,$
and similarly $A_{\gd}(R_a)\ne 0.$  Thus, as $A$ is $2$-dimensional, $A_0(L_a)=A_0(R_a)=0,$
and $A_{\gl,\gd}(a)\ne 0.$  By symmetry the same holds for $b.$

Write
\[
b=\ga_b a+b_{\gl,\gd},\quad a=\ga_a b+a_{\gl',\gd'}.
\]
Then (also by symmetry)
\[
\begin{aligned}
&b_{\gl,\gd}=-\ga_b a+b,\qquad a_{\gl',\gd'}=a-\ga_a b,\\
&ab=\ga_b(1-\gl)a+\gl b=\gd' a+\ga_a(1-\gd')b.\\
&ba=\ga_a(1-\gl')b+\gl' a=\gd b+\ga_b(1-\gd)a.
\end{aligned}
\]
We conclude, also using symmetry, that
\[
\begin{aligned}
&ab=\gd' a+\gl b\\
&ba=\gl' a+\gd b\\
&\textstyle{\ga_b=\frac{\gd'}{1-\gl}=\frac{\gl'}{1-\gd}.}\\
&\textstyle{\ga_a=\frac{\gd}{1-\gl'}=\frac{\gl}{1-\gd'}.}
\end{aligned}
\]
Hence
\[
\gl\gl'-\gd\gd'=\gl'-\gd'=\gl-\gd.
\]
Now
\[
\ff a\ni b_{\gl,\gd}^2=(-\ga_b a+b)^2=\ga_b^2a-\ga_b(ab+ba)+b=\ga_b^2a-\ga_b(\gl'+\gd')a-\ga_b(\gl+\gd)b+b.
\]
Hence,
\[
\begin{aligned}
&\ga_b=\frac{1}{\gl+\gd}\implies \frac{\gl'}{1-\gd}=\frac{1}{\gl+\gd}\implies\\
&\gl\gl'+\gl'\gd=1-\gd,\quad \text{and by symmetry}\\
&\gl'\gl+\gl\gd'=1-\gd'.
\end{aligned}
\]
Also we have
\[
\ff b\ni (a-\ga_ab)^2=a-\ga_a(\gl'+\gd')a-\ga_a(\gl+\gd)b+\ga_a^2 b.
\]
Hence
\[
\ga_a=\frac{1}{\gl'+\gd'}=\frac{\gd}{1-\gl'}.
\]
So
\[
\begin{aligned}
&\gl'\gd+\gd\gd'=1-\gl',\qquad\text{and by symmetry}\\
&\gl\gd'+\gd\gd'=1-\gl.
\end{aligned}
\]
 We thus have the following equalities
\begin{align}\label{1}
&\gl\gl'-\gd\gd'=\gl'-\gd'.\\\label{2}
&\gl\gl'-\gd\gd'=\gl-\gd.\\\label{3}
&\gl\gl'+\gl'\gd=1-\gd.\\\label{4}
&\gl'\gl+\gl\gd'=1-\gd'.\\\label{5}
&\gl'\gd+\gd\gd'=1-\gl'.\\\label{6}
&\gl\gd'+\gd\gd'=1-\gl.
\end{align}
Adding equation \eqref{1} to equation \eqref{5} gives
$\gl\gl'+\gl'\gd=1-\gd'.$  This and equation \eqref{3}
gives $\gd =\gd'.$  Then, equations \eqref{1} and \eqref{2}
give $\gl=\gl'.$  Finally, equation \eqref{1} gives $\gl^2-\gd^2=\gl-\gd,$
thus either $\gl=\gd,$ or $\gl+\gd=1.$
If $\gl =\gd,$ then equation \eqref{3} gives $\gl\in\{-1, 1/2\},$
and $A$ is as in \cite{HSS}.
When $\gl+\gd=1,$ this is example \ref{flex}(i),
because $ab=\gd a+\gl b.$
\end{proof}

\subsection*{Axes of Jordan type} $ $
\smallskip

The purpose of this section is to classify algebras $A$
generated by two axes of Jordan type.  Hence, throughout this section
$A$ is generated by the axes $a, b$ of Jordan type
$(\gl,\gd)$ and $(\gl',\gd')$ respectively.
Inspired by \cite{HRS} we let
\[
\gs =ab-\gl'a-\gl b.
\]
Recall our notation, from Notation \ref{not00}.

We start with two examples.

\begin{examples}\label{flex}
Let $\gl,\gd\in\ff$ such that $\gl,\gd\notin\{0,1\},$
$\gl+\gd=1,$ and $\gl\ne\gd.$
\medskip

\noindent
(1)\ Let $A$ be  the $2$-dimensional algebra $\ff a+\ff b$
with multiplication defined by
\[
a^2=a,\quad  b^2=b,\quad ab=\gd a+\gl b,\quad ba=\gl a+\gd b.
\]
Then $(ab)a=\gd a+\gl ba=\gd a+\gl^2a+\gl\gd b$
and $a(ba)=\gl a+\gd ab=\gl a+\gd^2a+\gl\gd b.$
Note however that $\gd+\gl^2=\gl+\gd^2.$ So $(ab)a=(ab)a.$
By symmetry $(ba)b=b(ab).$  Next we show that $R_a^2-R_a=L_a^2-L_a$.
We have
\begin{gather*}
a(ab)-ab=a(\gd a+\gl b)-\gd a-\gl b=\gl ab-\gl b=\gl(\gd a+\gl b)-\gl b\\
=\gl\gd a+\gl^2b-\gl b.
\end{gather*}
and
\begin{gather*}
(ba)a-ba=(\gl a+\gd b)a-\gl a-\gd b=\gd ba-\gd b=\gd(\gl a+\gd b)-\gd b\\
=\gd\gl a+\gd^2b-\gd b.
\end{gather*}
Since $\gl^2-\gl=\gd^2-\gd,$ we are done.

Next
\begin{gather*}
((\ga a+\gb b)a)(\ga a+\gb b)=(\ga a+\gb ba)(\ga a+\gb b)=\\
\ga^2 a+\ga\gb ab+\gb\ga (ba)a+\gb^2(ba)b,
\end{gather*}
and
\begin{gather*}
((\ga a+\gb b)(a(\ga a+\gb b))=(\ga a+\gb b)(\ga a+\gb ab)=\\
\ga^2 a+\ga\gb a(ab)+\gb\ga ba+\gb^2b(ab).
\end{gather*}
Since $(ba)a-ba=a(ab)-ab,$ and since $(ab)a=a(ba),$ we see that
$(xa)x=x(ax),$ for all $x\in A.$  By symmetry $(xb)x=x(bx),$ for all $x\in A,$
so $A$ is flexible.

We have
\begin{align*}
&a(a-b)=a-ab=a-(\gd a+\gl b)=\gl(a-b)\text{ and }\\
&b(a-b)=ba-b=\gl a+\gd b-b=\gl(a-b).
\end{align*}
Similarly $(a-b)a=\gd(a-b)=(a-b)b$. (Indeed $(a-b)^2=0.$) We thus
have $A_{\gl,\gd }(a)=A_{\gl,\gd }(b)=\ff(a-b)$, with
$(\ff(a-b))^2=0,$ and of course $A_0(a)=A_0(b)=0.$  Thus the fusion
laws hold for both $a$ and $b$, and both are  axes of Jordan type
$(\gl,\gd)$. Note that the idempotents in $A$ have the form $\ga
a+(1-\ga)b,\ \ga\in\ff.$  Also $\gs=0.$
\medskip

\noindent
(2)\  Let $A =\ff a+\ff b+\ff x,$ with multiplication
defined by $a^2=a,$ \ $b^2=b,\ x^2=0,$ and
\[
ab = ax =xb =\gl x,\qquad ba=xa=bx=\gd x.
\]
 So
\[
\ff x = A_{\gl,\gd },\qquad A_{0,0}=\ff (b-x).
\]

In checking the fusion rules for $a$, we have $(b-x)x = bx \in \ff
x$ and $x^2 = 0,$ and also
$$(b-x)^2 = b^2 -bx -xb + x^2 = b -(\gl + \gd)x =b-x.$$

Likewise for $b$. Thus $a$ and $b$ are axes of
 Jordan type $(\gl,\gd)$ and $(\gd,\gl)$ respectively.

Let us check the eigenvalues of the idempotent $b-x.$ $a(b-x) = 0 =
(b-x)a.$ $(b-x)x = \gd x$, and $x(b-x) = \gl x$. Thus $b-x$ also is
a Jordan axis, of type $(\gd,\gl).$

 To show that $A$ is flexible,
we compute that
\begin{equation*}
\begin{aligned}
&((\ga a+\gb b +\xi x)a)(\ga a+\gb b +\xi x)\\
&=(\ga a+\xi x)(\ga a+\gb b +\xi x)\qquad \eta=\gb\gd+\xi\gd\\
&=\ga^2 a+\ga\gb\gl x+\ga\xi\gl x+ \eta(\ga\gd+\gb\gl)x\\
&=\ga^2 a+\ga\gb\gl x+\ga\xi\gl x+(\ga\gb\gd^2+\gb^2\gl\gd+\ga\xi\gd^2+\xi\gd\gb\gl)x
\end{aligned}
\end{equation*}
and
\begin{equation*}
\begin{aligned}
&(\ga a+\gb b +\xi x)(a(\ga a+\gb b +\xi x))\\
&= (\ga a+\gb b +\xi x)(\ga a+\gc x)\qquad \gc=\gb\gl+\xi\gl\\
&=\ga^2 a+\ga\gb\gd x+\ga\xi\gd x+ \gc(\ga\gl+\gb\gd)x\\
&=\ga^2 a+\ga\gb\gd x+\ga\xi\gd x+(\ga\gb\gl^2+\gb^2\gl\gd+\xi\ga\gl^2+\xi\gl\gb\gd)x
\end{aligned}
\end{equation*}
We must show that
\[
\ga\gb\gl+\ga\xi\gl+\ga\gb\gd^2+\ga\xi\gd^2=\ga\gb\gd+\ga\xi\gd +\ga\gb\gl^2+\xi\ga\gl^2.
\]
But this follows from the fact that $\gl^2-\gl=\gd^2-\gd.$ By
symmetry $(yb)y=y(by),$ for all $y\in A.$ It is easy to check that
$(yx)y=y(xy),$ for all $y\in A,$ and we see that $A$ is flexible.
Also $\gs=\gl x-\gd a-\gl b.$

Let us determine the idempotents in $A$. Suppose $y:=\gamma_1
a+\gamma_2 b+\gamma_3 x \ne 0.$ Then
\[
\begin{aligned}
y^2=\gamma_1^2 a + \gamma_2^2 b+ &
\gamma_3(ax+xa)+\gamma_3(bx+xb)+\gamma_1\gamma_2(ab+ba) \\
&=\gamma_1^2 a + \gamma_2^2 b+ (\gamma_3(\gamma_1 +
\gamma_2)+\gamma_1\gamma_2)x,
\end{aligned}
\]
so $y$ idempotent
implies $\gamma_1,\gamma_2 \in \{0,1\}$ and $\gamma_3 =
\gamma_3(\gamma_1+\gamma_2)+\gamma_1\gamma_2.$ If $\gamma_1 =
\gamma_2 =1$ then $\gamma_3=-1,$ and $a+b-x$ is an idempotent but
$(a+b-x)x = x,$ so $x$ is a 1-eigenvalue; hence $a+b-x$ is not
absolutely primitive. It follows that the only axes are of the form:
\medskip

$a +\gamma x,$ of type $(\gl,\gd)$ since $(a +\gamma x)x = \gl x$
and $x(a +\gamma x) = \gd x$;

$b +\gamma x,$ of type $(\gd,\gl)$.
\medskip

Note that the  subalgebra generated by all axes of type $(\gl,\gd)$
is $\ff a + \ff x,$ and the  subalgebra generated by all axes of
type $(\gd,\gl)$ is $\ff b + \ff x,$ both proper subalgebras of $A$.
\end{examples}

We modify Seress' Lemma   \cite[Lemma~4.3]{HRS}:
\begin{lemma}[General Seress' Lemma]\label{lem seress}
If $a$ is an axis of type $(\gl,\gd)$
then $a(xy) =(ax)y + a(x_0y_0)$ for any $x \in A$ and $y \in \ff a + A_0(L_a).$
In particular $a(xy)\in (ax)y+\ff a.$
Symmetrically, $(yx)a = y(xa) + ({}_{0}{y}\, {}_{0}{x})a,$
for any $x\in A,$ and $y\in \ff a+A_0(R_a).$ In particular, $(yx)a\in y(xa)+\ff a.$
\end{lemma}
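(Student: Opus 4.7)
The plan is to exploit the $\zz_2$-grading from Definition~\ref{defs main3}(1c) --- write $A_+ = \ff a + A_0(L_a)$ and $A_- = A_\gl(L_a)$ --- together with the commutation $L_a R_a = R_a L_a$ built into the definition of a two-sided axis. The commutation guarantees that $R_a$ preserves each $L_a$-eigenspace; in particular, in the decomposition $x = \ga_x a + x_0 + x_\gl$ from Notation~\ref{not00}, one has $x_0 a \in A_0(L_a)$ and $x_\gl a \in A_\gl(L_a)$, which gives $a(x_0 a) = 0$ and $a(x_\gl a) = \gl\,(x_\gl a)$.

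Since the asserted identity $a(xy) - (ax)y = a(x_0 y_0)$ is linear in $x$, it suffices to verify it separately for $x$ in each of $\ff a$, $A_0(L_a)$, $A_\gl(L_a)$, while using $y = \ga_y a + y_0$ (no $\gl$-component, because $y \in \ff a + A_0(L_a)$). The case $x \in \ff a$ is immediate, since $a y_0 = 0$ collapses both $(ax)y$ and $a(xy)$ to $\ga_x \ga_y a$ and $x_0 = 0$. The case $x = x_\gl$ uses the fusion rule $A_- \cdot A_+ \subseteq A_-$: the expansion $xy = \ga_y(x_\gl a) + x_\gl y_0$ lies entirely in $A_\gl(L_a)$, so $a(xy) = \gl xy = (ax)y$, and again $x_0 = 0$. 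The case $x = x_0$ is the one that produces the correction term: $ax = 0$, while $xy = \ga_y(x_0 a) + x_0 y_0$; the first summand is annihilated by $a$ by the observation above, so $a(xy) = a(x_0 y_0)$.

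The ``in particular'' clause follows from the remaining fusion rule $A_+ \cdot A_+ \subseteq A_+$, which forces $x_0 y_0 \in \ff a + A_0(L_a)$ and hence $a(x_0 y_0) \in \ff a$. The symmetric identity $(yx)a = y(xa) + ({}_{0}{y}\,{}_{0}{x})a$ is obtained by the same argument applied to the $R_a$-eigenspace decomposition, with $L_a R_a = R_a L_a$ again playing the role of ensuring that $L_a$ respects the $R_a$-eigenspaces. There is no real obstacle beyond careful bookkeeping; the only point absent from the commutative Seress Lemma in \cite{HRS} is the need to invoke $L_a R_a = R_a L_a$ to transport $R_a$ across the $L_a$-eigenspace decomposition.
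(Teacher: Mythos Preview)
Your proof is correct and follows essentially the same approach as the paper: both use the commutation $L_aR_a=R_aL_a$ to see that $x_0a\in A_0(L_a)$ and $x_\gl a\in A_\gl(L_a)$, and then compute $a(xy)$ and $(ax)y$ directly using the $\zz_2$-grading. The only cosmetic difference is that the paper expands $xy$ all at once, whereas you split by linearity in $x$ into the three cases $x\in\ff a$, $x\in A_0(L_a)$, $x\in A_\gl(L_a)$.
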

\begin{proof}
Since $L_a$ and $R_a$ commute, we have $x_0a\in A_0(L_a),$ and $x_{\gl}a\in A_{\gl}(L_a),$ hence,
\[\begin{aligned}
a(xy)& =  a(\ga_x\ga_y a+\ga_y x_0a+x_0y_0+\ga_y x_{\gl}a+x_{\gl}y_0)
\\ & =\ga_x\ga_y a+a(x_0y_0)+\gl\ga_y x_{\gl}a+\gl x_{\gl}y_0.
\end{aligned}
\]
whereas
\[
(ax)y = (\ga_xa+\gl x_{\gl})(\ga_ya+y_0)=\ga_x\ga_ya+\gl \ga_y x_{\gl}a+\gl x_{\gl}y_0.\qedhere
\]
\end{proof}

Here are some basic properties of $\gs$ and $A.$

\begin{lemma}\label{lem g}
\begin{enumerate}
\item
$\gs =(\ga_b(1-\gl)-\gd')a-\gl b_0=(\ga_a(1-\gd')-\gl)b-\gd' a_0.$

\item
If $b_{\gl,\gd}=0,$ then $ab=ba=0$ and $A=\ff a\oplus \ff b.$

\item
If $b_0=0,$ then $\dim (A)=2.$

\item
$A$ is spanned by $a, b, \gs,$ so $A$ is spanned by $a,b, ab.$

\item
Either $A$ is commutative, and then $\gl=\gd,$ and $\gl'=\gd',$ or
$\gl\ne\gd$ and $\gl'\ne\gd'.$
\end{enumerate}
\end{lemma}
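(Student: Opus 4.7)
My plan treats the five parts in order, relying throughout on two decompositions: the $L_a$-eigenspace decomposition $b = \ga_b a + b_0 + b_{\gl,\gd}$ (a $\zz_2$-grading by the Jordan hypothesis), and the mirror right-decomposition of $a$ relative to $b$. The principal tools are Lemma \ref{lem basic}(2) (yielding $ab$ in terms of $b_{\gl,\gd}$), Lemma \ref{lem ab notin ffa} (ruling out $ab \in \ff a$ except when $ab = 0$), Lemma \ref{lem basic}(5) for centrality, and Seress' Lemma \ref{lem seress} for triple products when needed.

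Parts (1)--(3) are short. For (1), I substitute $b_{\gl,\gd} = b - \ga_b a - b_0$ into $ab = \ga_b a + \gl b_{\gl,\gd}$ and subtract the defining terms of $\gs$, then mirror the computation with the right decomposition of $a$ for the second expression. For (2), $b_{\gl,\gd} = 0$ forces $ab = \ga_b a \in \ff a$, and Lemma \ref{lem ab notin ffa}(i) gives $ab = 0$ with $\ga_b = 0$; Jordan type then places $b \in A_{0,0}(a)$, so $ba = 0$ and $A$ is spanned by $a, b$ with trivial cross-products. For (3), $b_0 = 0$ collapses part (1) to $\gs \in \ff a$, so $ab, ba \in \ff a + \ff b$, making this a subalgebra of dimension at most two.

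The main content is part (4): $A = V$ where $V := \ff a + \ff b + \ff \gs$. The strategy is to show $V$ is closed under multiplication, which suffices since $A$ is generated by $a, b$. From part (1), both $b_0$ and $a_0(b)$ lie in $V$, hence so do $b_{\gl,\gd}$ and $a_{\gl',\gd'}(b)$. Consequently $V$ has bases adapted to the joint eigenspace decompositions under both $(L_a, R_a)$ and $(L_b, R_b)$, making it invariant under all four operators; this immediately places $ba, a\gs, \gs a, b\gs, \gs b$ in $V$. The only remaining product is $\gs^2$, which via $\gs = c_1 a - \gl b_0$ and $a b_0 = b_0 a = 0$ reduces to $b_0^2 \in V$. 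I would expand $b_0^2 = (b - \ga_b a - b_{\gl,\gd})^2$, kill $b_{\gl,\gd}^2$ using Lemma \ref{lem ab notin ffa}(iv) (valid when $\gl \ne \gd$), and control $b \cdot b_{\gl,\gd}$ and $b_{\gl,\gd} \cdot b$ via the $L_b, R_b$-invariance of $V$. The commutative subcase $\gl = \gd$, where Lemma \ref{lem ab notin ffa}(iv) is vacuous, can be handled by invoking the HRS result. This is the step I expect to require the most care.

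For part (5), Lemma \ref{lem basic}(5) gives $a \in Z(A) \iff \gl = \gd$ and $b \in Z(A) \iff \gl' = \gd'$; if both hold, then $a$ and $b$ are central, and since $\gs \in \ff a + \ff b + \ff ab$, the element $\gs$ commutes with $a$ and $b$, forcing $A$ commutative. Conversely, if $\gl = \gd$ but $\gl' \ne \gd'$, then $ab = ba$ combined with the expressions $ab = \ga_a b + \gd' a_{\gl',\gd'}(b)$ and $ba = \ga_a b + \gl' a_{\gl',\gd'}(b)$ forces $a_{\gl',\gd'}(b) = 0$; then $ab \in \ff b$ collapses via Lemma \ref{lem ab notin ffa} to $ab = 0 = ba$, yielding $A = A_{1,1}(b) \oplus A_{0,0}(b)$, whereupon Remark \ref{gl=gd} lets us adopt $\gl' = \gd'$ by convention. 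The symmetric argument handles the other direction, completing the dichotomy.
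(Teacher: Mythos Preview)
Your treatment of parts (1)--(3) matches the paper, and your argument for (5) is sound in spirit. The problem is in part (4), specifically the case $\gl = \gd$.

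Your plan for $\gs^2 \in V$ reduces to $b_{\gl,\gd}^2 \in V$, which you dispatch via Lemma~\ref{lem ab notin ffa}(iv) when $\gl \ne \gd$ and propose to cover by ``invoking the HRS result'' when $\gl = \gd$. That appeal is not available here: the framework of \cite{HRS} builds the fusion rule $A_0(a)^2 \subseteq A_0(a)$ into its hypotheses, whereas the present paper explicitly drops that assumption (see the remark following Definition~\ref{defs main3}). Indeed, Corollary~\ref{co 020}, which \emph{derives} $A_0(a)^2 \subseteq A_0(a)$, rests on the very lemma you are proving. So the commutative subcase of (4) is genuinely open in your sketch; and since your proof of (5) in the case $\gl = \gd$, $\gl' = \gd'$ needs (4) to conclude that $A$ is commutative, that branch of (5) is also incomplete.

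The paper sidesteps the case split altogether. From part (1) one reads off directly that $a\gs = \gs a \in \ff a$ and $b\gs = \gs b \in \ff b$. Then a single application of Seress' Lemma~\ref{lem seress}---using that $\gs \in \ff a + A_{0,0}(a)$, so that with $x = b$ and $y = \gs$ one gets $(ab)\gs \in a(b\gs) + \ff a \subseteq \ff(ab) + \ff a$---yields
\[
\gs^2 = (ab - \gl' a - \gl b)\gs = (ab)\gs - \gl' a\gs - \gl b\gs \in V
\]
uniformly, with no reference to $b_{\gl,\gd}^2$ or to whether $\gl = \gd$. Replacing your expansion of $b_0^2$ by this Seress step closes the gap and in fact shortens the argument.
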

\begin{proof}
(1)
We compute that,
\[
\begin{aligned}
&\gs =(\ga_ba+\gl b_{\gl,\gd})-\gd' a-\gl\ga_ba-\gl b_0-\gl b_{\gl,\gd}=(\ga_b-\gl\ga_b-\gd')a-\gl b_0,\\
&\gs=(\ga_ab+\gd' a_{\gl',\gd'})-\gd' \ga_a b-\gd' a_0-\gd' a_{\gl',\gd'}-\gl b=(\ga_a-\gd'\ga_a-\gl)b-\gd' a_0.
\end{aligned}
\]

(2)  If $b_{\gl,\gd}=0,$ then $ab, ba\in\ff a,$ so $ab=ba=0,$ by \ref{lem ab notin ffa}(i, ii).

(3)  If $b_0=0,$ then $b_{\gl,\gd}\in\ff a+\ff b.$  But then $ab, ba\in\ff a+\ff b,$ so $\dim (A)=2.$

(4)  Let $V =  \ff a + \ff ab +\ff \gs =  \ff a + \ff b + \ff ab.$
First, by \ref{lem basic}(4), $ba\in V.$

Also, by (1), $\gs a= a\gs\in\ff a,$ and $\gs b=b\gs \in\ff b.$
Further, by Seress' Lemma~\ref{lem seress}, $\gs (ab)\in (\gs a)b+\ff a\subseteq\ff a+\ff ab.$
Hence
\[
\gs^2=\gs (ab-\gd'a-\gl b)\in \ff a+\ff b +\ff ab=V.
\]
This shows that $V$ is a subalgebra of $A,$ so $A=V.$

(5)  Suppose that $\gl=\gd.$ Then, by Lemma \ref{lem basic}(5),
$a\in Z(A).$  Thus $a$ commutes with $b$ and $\gs.$  By (1),
also $\gs b=b \gs,$ so by (2) $A$ is commutative.
By Lemma \ref{lem basic}(5), $\gl'=\gd'.$
 \end{proof}

Let us first deal with the non-commutative case of Lemma \ref{lem g}(5).

\begin{prop}\label{prop nc}
Suppose that $\gl\ne\gd, \gl'\ne\gd',$ and $\dim(A)=3.$
Then $A$ is as in Examples \ref{flex}(ii).
\end{prop}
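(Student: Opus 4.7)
The plan is to identify $A$ with the algebra of Example \ref{flex}(ii) by pinning down its multiplication table in a canonical basis. By Lemma \ref{lem g}(4) and $\dim(A)=3$, the triple $\{a,b,ab\}$ is a basis of $A$, so in particular $ab \notin \ff a + \ff b$. By Lemma \ref{lem g}(2)(3), both $b_0$ and $x := b_{\gl,\gd}$ are nonzero, and by Lemma \ref{lem ab notin ffa}(iv) (using $\gl \ne \gd$), $x^2 = 0$. Thus $A = \ff a \oplus \ff b_0 \oplus \ff x$, with $A_{0,0}(a) = \ff b_0$ and $A_{\gl,\gd}(a) = \ff x$.

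The first step is to apply Lemma \ref{lem basic}(2) to both axes simultaneously. For axis $a$ this yields $\gd\cdot ab - \gl\cdot ba = (\gd-\gl)\ga_b a$, and for axis $b$ it yields $\gd'\cdot ba - \gl'\cdot ab = (\gd'-\gl')\ga_a b$. Eliminating $ba$ between these gives
\[
(\gd\gd' - \gl\gl')\, ab \;=\; \gd'(\gd-\gl)\ga_b\, a \;+\; \gl(\gd'-\gl')\ga_a\, b.
\]
Since $ab \notin \ff a + \ff b$, and since all of $\gl,\gd,\gl',\gd'$ are nonzero with $\gl\ne\gd$ and $\gl'\ne\gd'$, every coefficient here must vanish. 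This forces $\ga_a = \ga_b = 0$ and $\gl\gl' = \gd\gd'$. Consequently $ab = \gl x$, $ba = \gd x$ and $b = b_0 + x$.

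Next I would decompose $a$ with respect to the axis $b$ and set $y := a_{\gl',\gd'}(b)$. A second application of Lemma \ref{lem basic}(2) to $b$ gives $ab = \gd' y$; comparing with $ab = \gl x$ forces $y = (\gl/\gd')\,x$, so $x \in A_{\gl',\gd'}(b)$, and hence $bx = \gl' x$, $xb = \gd' x$. Now the $A_{\gl,\gd}(a)$-component of the identity $b^2 = b$ (taken in the $\zz_2$-grading with respect to $a$) reads $b_0 x + x b_0 = x$, and since $x^2 = 0$ we have $b_0 x = bx$ and $x b_0 = xb$; thus $(\gl' + \gd')\,x = x$, i.e., $\gl' + \gd' = 1$. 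Running the symmetric argument from $a^2 = a$ in the $\zz_2$-grading with respect to $b$, and using $y^2 = 0$ from Lemma \ref{lem ab notin ffa}(iv), yields $\gl + \gd = 1$.

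Finally, combining $\gl + \gd = \gl' + \gd' = 1$ with $\gl\gl' = \gd\gd'$ forces $\gl' = \gd$ and $\gd' = \gl$. The full multiplication in the basis $\{a,b,x\}$ then reads $a^2 = a$, $b^2 = b$, $x^2 = 0$, $ab = ax = xb = \gl x$ and $ba = xa = bx = \gd x$, which matches Example \ref{flex}(ii) exactly. I expect the main obstacle to lie in the first step, where one must simultaneously juggle the two decompositions of the single element $ab$ coming from the two axes and extract the vanishing of both primitive coefficients $\ga_a$ and $\ga_b$; once this is in hand, the $\zz_2$-grading argument delivers $\gl' + \gd' = 1 = \gl + \gd$ almost immediately, and identification with Example \ref{flex}(ii) is then a matter of bookkeeping.
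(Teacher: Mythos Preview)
Your proof is correct, and the overall architecture is the same as the paper's: show $\ga_a=\ga_b=0$, identify $A_{\gl,\gd}(a)=A_{\gl',\gd'}(b)=\ff x$, extract $\gl'+\gd'=1=\gl+\gd$ from the $\zz_2$-grading, and solve. The genuine difference lies in how you obtain $\ga_a=\ga_b=0$. The paper first argues that $b\,b_{\gl,\gd}\in\ff b_{\gl,\gd}$ (using $b_{\gl,\gd}^2=0$), so $b_{\gl,\gd}$ is an $L_b$-eigenvector with eigenvalue in $\{0,1,\gl'\}$; it rules out $0$ and $1$ separately, concludes $A_{\gl,\gd}(a)=A_{\gl',\gd'}(b)$, and then writes $ab$ in two bases to force $\ga_a=\ga_b=0$. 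Your elimination of $ba$ from the pair of relations $\gd\cdot ab-\gl\cdot ba\in\ff a$ and $\gd'\cdot ba-\gl'\cdot ab\in\ff b$ is more direct and has the bonus of delivering $\gl\gl'=\gd\gd'$ at the same stroke; the paper only recovers this relation later, by noting that the elements of $A_0(L_b)$ and $A_0(R_b)$ inside $\ff a+\ff x$ must agree. Either way the remaining bookkeeping is essentially identical.
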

\begin{proof}
Since $\dim(A)=3,$ Lemma \ref{lem g}(2) implies that $A_{\gl,\gd}(a)\ne 0\ne A_{\gl',\gd'}(b).$
Also, $A_0(a)\ne 0\ne A_0(b),$  by Lemma \ref{lem g}(3).

By Lemma~\ref{lem ab notin ffa}(iv), $b_{\gl,\gd}^2= 0,$ so
\[
b b_{\gl,\gd} = (\ga_b a +b_0+ b_{\gl,\gd})b_{\gl,\gd} =
 \ga_b\gl b_{\gl,\gd}+b_0b_{\gl,\gd}\in\ff  b_{\gl,\gd},
\]
so $b b_{\gl,\gd} = \rho b_{\gl,\gd},$ with $\rho \in \{0,1,\gl'\}$. But $\rho \ne 1$ since
otherwise $b_{\gl,\gd} \in \ff b,$ implying $ab = \gl b,$  contrary to Lemma~\ref{lem ab notin ffa}(i).
Suppose $\rho= 0.$ Then
\[
b=b^2=(b-b_{\gl,\gd})^2=(\ga_b a+b_0)^2=\ga_b^2 a+b_0^2.
\]
This is impossible.

Hence $b b_{\gl,\gd} = \gl' b_{\gl,\gd}.$ Thus $A_{\gl',\gd'}(b)=A_{\gl,\gd}(a)=\ff b_{\gl,\gd}.$
Set $y:=b_{\gl,\gd}.$
We have $ab = \ga_b a + \gl y$ and $ab = \ga_a b + k y,$ for some $k\in \ff.$
Since $a,b,y$ are linearily independent,  $\ga_a = \ga_b = 0.$
Hence $ab=\gl y$ and $ba=\gd y.$  We thus have
\[
ab=ay=\gl y,\quad ba=ya=\gd y,\quad by=\gl' y, yb=\gd'y.
\]
Note now that $b-y\in A_0(a),$
so $(b-y)^2=b-(\gl'+\gd')y\in \ff a+A_0(a).$  But then
also
\[
(b-(\gl'+\gd')y)-(b-y)=y-(\gl'+\gd')y\in \ff a+A_0(a).
\]
This forces  $\gl'+\gd'=1.$  By symmetry, $\gl+\gd=1.$

Further,
$y-\frac{\gl'}{\gd}a, y-\frac{\gd'}{\gl}a\in A_0(b),$
so $\frac{\gl'}{\gd}=\frac{\gd'}{\gl},$ or $\gl'\gl=\gd'\gd.$
It follows that
\[
\gl'\gl=(1-\gl')\gd \implies \gd =\gl'(\gl+\gd)=\gl'.
\]
Then $\gd'=\gl,$ and this is Example \ref{flex}(ii).
\end{proof}

\begin{prop}\label{prop gs}
Assume that $\gl=\gd,\ \gl'=\gd'$ and that $\dim(A)=3.$
\begin{enumerate}
\item
$\gs=(\ga_b(1-\gl)-\gl')a-\gl b_0=(\ga_a(1-\gl')-\gl)b-\gl' a_0.$

\item
$a\gs=(\ga_b(1-\gl)+\gl')a$\quad and \quad
$b\gs=(\ga_a(1-\gl')+\gl)b.$

\item
$b_0^2\in A_0(a),$ and $a_0^2\in A_0(b).$

\item
$\ga_b(1-\gl)+\gl'=\ga_a(1-\gl')+\gl:=\gc,$ and $\gs^2=\gc\gs.$

\item
If $\gl=\gl',$ then $\ga_a=\ga_b:=\gvp,$ and $A$ is a
primitive axial algebra of Jordan type $B(\gl,\gvp),$ as given in
\cite[Theorem (4.7), p.~98]{HRS}.
\end{enumerate}
\end{prop}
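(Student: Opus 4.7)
Part (1) is immediate from Lemma \ref{lem g}(1): substituting $\gd=\gl$ and $\gd'=\gl'$ into the two expressions there yields the two displayed identities for $\gs$. For Part (2), since $\gl=\gd$ gives $a\in Z(A)$ (Lemma \ref{lem basic}(5)), one has $a\gs=\gs a$; expanding $\gs=ab-\gl'a-\gl b$ and applying Lemma \ref{lem basic}(3) to $a(ab)$ causes the two $ab$-terms to cancel, leaving the displayed scalar multiple of $a$. The formula for $b\gs$ follows by the $a\leftrightarrow b$ symmetry.

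Part (3) is the crux. Under the hypotheses, by Lemma \ref{lem g}(2),(3) both $A_0(L_a)$ and $A_\gl(L_a)$ are nonzero, hence each is one-dimensional, and $A=\ff a\oplus\ff b_0\oplus\ff b_\gl$. The $\zz_2$-grading for the Jordan axis $a$ then forces $b_0^2=\gm a+\eta b_0$, $b_\gl^2=\gt a+\upsilon b_0$, and $b_0 b_\gl=\kappa b_\gl$ for some scalars $\gm,\eta,\gt,\upsilon,\kappa$. My plan is to compute $b\gs$ in two ways and compare. From Part (2) directly, $b\gs=\gc_b b$ where $\gc_b$ is the scalar there. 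From $\gs=\gc_a a-\gl b_0$ (Part (1)) together with $ab=\ga_b a+\gl b_\gl$ and the products just listed, one obtains $b\gs=(\gc_a\ga_b-\gl\gm)a-\gl\eta b_0+\gl(\gc_a-\kappa)b_\gl$. Matching the three basis coefficients against $\gc_b(\ga_b a+b_0+b_\gl)$ gives the identities $\gc_a\ga_b-\gl\gm=\gc_b\ga_b$, $-\gl\eta=\gc_b$, $\gl(\gc_a-\kappa)=\gc_b$. Expanding the idempotent relation $b^2=b$ in the same basis produces three further identities, most usefully $2\ga_b\gl+2\kappa=1$. Combining these with the symmetric $a\leftrightarrow b$ version yields enough linear constraints to force $\gm=0$, proving $b_0^2\in A_0(L_a)$; the statement $a_0^2\in A_0(L_b)$ follows by symmetry.

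Once $\gm=0$ is known, Part (4) is short. The identity $\gc_a\ga_b-\gl\gm=\gc_b\ga_b$ collapses to $\ga_b\gc_a=\ga_b\gc_b$; combined with the $b_\gl$-coefficient identity (and, in the degenerate case $\ga_b=0$, the analogous one from the $a$-side), this forces $\gc_a=\gc_b$. Calling the common value $\gc$, expand $\gs^2=(\gc_a a-\gl b_0)^2=\gc^2 a+\gl^2\eta b_0$ and use $-\gl\eta=\gc_b=\gc$ to rewrite this as $\gc^2 a-\gl\gc b_0=\gc(\gc a-\gl b_0)=\gc\gs$.

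For Part (5), setting $\gl=\gl'$ in $\gc_a=\gc_b$ yields $(\ga_a-\ga_b)(1-\gl)=0$, and since $\gl\ne 1$ we get $\ga_a=\ga_b=:\gvp$. Finally, verify by direct computation that the multiplication table on the basis $\{a,b,\gs\}$, now entirely determined in terms of $\gl$ and $\gvp$ by Parts (1)--(4), coincides with the presentation of $B(\gl,\gvp)$ in \cite[Theorem (4.7)]{HRS}. The main obstacle of the whole proposition lies in pinning down $\gm=0$ in Part (3): the $b\gs$-matching alone is under-determined, and the identification $\gm=0$ comes only after folding in both the idempotent relation $b^2=b$ and the symmetric analysis with $a$ and $b$ interchanged.
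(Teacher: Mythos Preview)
Your treatments of Parts (1), (2), and (5) are fine and close to the paper's; for (2) the paper simply reads off $a\gs$ and $b\gs$ from the expression in (1), which is equivalent to your computation. Your argument for $\gs^2=\gc\gs$ in Part (4), granted $\gm=0$ and $\gc_a=\gc_b$, is also correct.

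The gap is in Part~(3). The system you write down --- (I)--(III) from matching the two expressions for $b\gs$, (IV) from $b^2=b$, and the $a\leftrightarrow b$-symmetric versions --- does not force $\gm=0$. Concretely: substituting (IV) into (III) and using $\gc_a=\ga_b(1-\gl)-\gl'$, $\gc_b=\ga_a(1-\gl')-\gl$ gives $\ga_a(1-\gl')=\gl(\ga_b-\gl'+\tfrac12)$, and the symmetric computation gives $\ga_b(1-\gl)=\gl'(\ga_a-\gl+\tfrac12)$. This $2\times 2$ linear system in $\ga_a,\ga_b$ has determinant $1-\gl-\gl'$; when $\gl+\gl'\ne 1$ it forces $\ga_b=\gl'/2$ and $\ga_a=\gl/2$, whence $\gc_a-\gc_b=(\gl-\gl')/2$, and then (I) yields $\gm=\gl'(\gl-\gl')/(4\gl)$, which is \emph{nonzero} whenever $\gl\ne\gl'$. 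The two remaining identities from $b^2=b$ only determine $\gt$ and $\upsilon$ and impose no further constraint on $\gm$; the symmetric equations constrain $\gm'$ (the $b$-coefficient of $a_0^2$ in the $b$-basis), not $\gm$ itself. So the constraints you have actually listed are underdetermined for your purpose: to close the argument this way you would have to encode the fusion rules for $b$ in the $a$-basis structure constants, a nontrivial change-of-basis computation that you have not carried out.

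The paper sidesteps all of this by applying Seress' Lemma (Lemma~\ref{lem seress}) to compute $(ab)\gs$ in two ways: once through $a$ as $a(b\gs)$ plus a term in $\ff a$ coming from $a(b_0^2)$, and once (using $ab=ba$) through $b$ as $b(a\gs)$ plus a term in $\ff b$ coming from $b(a_0^2)$. Since $a,b,ab$ are linearly independent, matching coefficients immediately gives $a(b_0^2)=0$, $b(a_0^2)=0$, and the equality of the two scalars in Part~(2), establishing (3) and the first half of (4) simultaneously in a few lines.
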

\begin{proof}
Note that by Lemma \ref{lem g}(5), $A$ is commutative.

(1)  Follows from \ref{lem g}(1).

(2) Is immediate from (1).

(3)\&(4)  By Seress' lemma \ref{lem seress}, and since
 $\gs_{0}=-\gl b_0,$ we have
\[
(ab)\gs=a(b\gs)-\gl a(b_0^2)=(\ga_a(1-\gl')+\gl)ab-\gl a(b_0^2).
\]
Similarly,
\[
(ab)\gs=(ba)\gs=(\ga_b(1-\gl)+\gl')ab-\gl' b(a_0^2).
\]
Since $a(b_0^2)\in \ff a$ and  $b(a_0^2)\in \ff b$ and $a,b$ and
$ab$ are linearly independent, we must have $a(b_0^2)=b(a_0^2)=0,$
so (3) follows and the first part of (4) follows. Now we have $(ab)\gs =\gc ab,$ so
\[
\gs^2=(ab-\gl'a-\gl b)\gs =(ab)\gs -\gl' a\gs-\gl b\gs=\gc ab-k\gl' a-k\gl b=\gc\gs.
\]

(5)  If $\gl=\gl',$ then by (4), $\ga_a=\ga_b:=\gvp,$ and $A$ is the algebra $B(\gl,\gvp).$
\end{proof}

\begin{cor}\label{co 020}
${A_0(a)}^2 \subseteq A_0(a)$, that is, condition (c) in the
introduction of \cite{HRS} follows from conditions (a),(b), and (d).
\end{cor}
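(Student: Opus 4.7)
The plan is to exploit the classification of $A$ already obtained in the preceding propositions, since by Lemma \ref{lem g}(4), $A$ is spanned by $a$, $b$, $ab$, so $\dim A \le 3$, and only a short case analysis remains. Throughout, $a$ is an axis of Jordan type $(\gl,\gd)$, so $A_0(a) = A_{0,0}(a)$ consists of elements annihilated by $L_a$ and $R_a$.

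First I would dispose of the easy case $\dim A \le 2$ using Theorem \ref{thm dim 2}. If $A$ is one of the commutative algebras in \cite[Lemma 3.1.2]{HSS}, then $A$ is already a primitive axial algebra of Jordan type and the inclusion $A_0(a)^2\subseteq A_0(a)$ is part of the definition; if instead $A$ is Example \ref{flex}(i), then $A_{0,0}(a)=0$ (as $A=\ff a\oplus \ff(a-b)$ with $a-b$ a $(\gl,\gd)$-eigenvector), so the inclusion is vacuous.

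Next, for $\dim A = 3$, I would split according to Lemma \ref{lem g}(5). In the non-commutative subcase $\gl\ne\gd$, Proposition \ref{prop nc} identifies $A$ with Example \ref{flex}(ii); there $A_0(a)=\ff(b-b_{\gl,\gd})$ is one-dimensional and spanned by an idempotent (since $(b-b_{\gl,\gd})^2 = b-b_{\gl,\gd}$ using $\gl+\gd=1$), so the product lies in $A_0(a)$. In the commutative subcase $\gl=\gd$, Lemma \ref{lem g}(2),(3) forces both $b_{\gl,\gd}\ne 0$ and $b_0\ne 0$, and then the Jordan decomposition $A=\ff a\oplus A_0(a)\oplus A_{\gl,\gd}(a)$ with $\dim A=3$ forces each nontrivial summand to be one-dimensional, giving $A_0(a)=\ff b_0$. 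The required inclusion then reduces to the statement $b_0^2\in A_0(a)$, which is exactly Proposition \ref{prop gs}(3).

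There is essentially no real obstacle here: the work has already been done in Propositions \ref{prop nc} and \ref{prop gs}. The only minor subtlety is justifying that $A_0(a)$ is spanned by $b_0$ in the commutative $3$-dimensional case; this is why one must first rule out $b_0=0$ or $b_{\gl,\gd}=0$ via Lemma \ref{lem g}(2),(3) before invoking Proposition \ref{prop gs}(3).
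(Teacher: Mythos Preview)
Your proof is correct and follows essentially the same route as the paper: reduce to $\dim A\le 3$ via Lemma~\ref{lem g}(4), dispose of $\dim A\le 2$ by Theorem~\ref{thm dim 2}, handle the non-commutative $3$-dimensional case via Proposition~\ref{prop nc}, and in the commutative $3$-dimensional case show $A_0(a)=\ff b_0$ and invoke Proposition~\ref{prop gs}(3). The only cosmetic difference is that the paper rules out $\dim A_0(a)=2$ by a separate contradiction (using Lemma~\ref{lem ab notin ffa}(i)), whereas you reach $\dim A_0(a)=1$ directly from the dimension count after noting $b_0\ne 0$ and $b_{\gl,\gd}\ne 0$.
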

\begin{proof}
By Lemma \ref{lem g}, $\dim (A)\le 3.$
If $\dim (A)=2,$ the we are done by Theorem \ref{thm dim 2}.
So assume $\dim (A)=3.$  Of course $\dim (A_0) \le 2.$

If $A$ is not commutative, then Proposition \ref{prop nc} completes the proof.
So we may assume that $A$ is commutative.
Suppose $b_0=0.$ then $b_{\gl,\gd}\in \ff a+\ff b,$ and then $ab, ba\in\ff a+\ff b,$
and $\dim (A)=2,$ a contradiction.
Suppose $\dim (A_0) = 1$ then
$A_0= \ff b_0,$ and we are done  by Proposition \ref{prop gs}(3).

Hence we may assume that $\dim (A_0) = 2,$ so $A =  \ff a+A_0.$ Hence
$b =\ga_b a+b_0,$  so $ab=\ga_ba,$ contradicting Lemma \ref{lem ab notin ffa}(i).
\end{proof}

In the next result we will use the {\it Miyamoto involution}
$\gt_a,$ associated with an axis $a$ of Jordan  type $(\gl,\gd).$
Because of the $Z_2$-grading of $A$ induced by $a,$ the map
$x\mapsto x^{\gt_a},$ where $x=\ga_xa+x_0+x_{\gl,\gd},$ and
$x^{\gt_a}=\ga_xa+x_0-x_{\gl,\gd}$ is an automorphism of $A.$

\begin{prop}\label{prop c}
Assume that $\gl=\gd,\ \gl'=\gd',$ and that $\dim (A)=3.$
Write $A_0$ for $A_0(a).$
\begin{enumerate} \eroman
\item
we have
\[
\gc:=\ga_b(1-\gl)+\gl'=(\ga_a(1-\gl')+\gl)\in \ff,
\]
and $A=\ff a + \ff b + \ff \gs,$ with the
multiplication table $a^2 = a,$ $b^2= b,$ $ab=ba= \gl' a +\gl b+\gs,$
$a\gs =\gs a =\gc a,$ $b \gs = \gs b=\gc b$,  and $\gs^2 = \gc \gs$.

Conversely, if $A$ is an algebra defined by this multiplication
table, the $A$ is generated by $a,b,$ which are axes of Jordan
type $(\gl,\gl), (\gl',\gl'),$ respectively.

\item
If $\gc=0,$ then $\gl = \gl'$.

\item
If  $ \gamma \ne 0,$ then  $ \frac{ 1}{\gc}\gs, $ denoted as ${\bf 1},$ is
the unit element of $A$.  We have $\dim (A_0) = 1=\dim (A_0(b)).$

\item
${\bf 1}-a$ is an axis of Jordan type  $(1-\gl,1-\gl).$

\item
For axes $c,d\in A,$ let $A'(c,d)$ be the subalgebra generated by $c$ and $c^{\gt _d}.$
If $\dim (A'(a,b)) = 2,$ then either $\gl = \half,$ or $A=A'({\bf 1}-a,b).$

\item
$A$ is a primitive axial algebra of Jordan type as in \cite{HRS}.
\end{enumerate}
\end{prop}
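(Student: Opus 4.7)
The plan is to establish the six assertions in order, relying mainly on Proposition \ref{prop gs}, Lemmas \ref{lem basic}--\ref{lem g}, and Theorem \ref{thm dim 2}. For part (i), everything is a compilation: the common value $\gc$ is given by Proposition \ref{prop gs}(4); the products $a\gs = \gc a$, $b\gs = \gc b$, and $\gs^2 = \gc\gs$ come from Proposition \ref{prop gs}(2),(4); $ab = \gl'a + \gl b + \gs$ is the definition of $\gs$; and $ba = ab$ by commutativity of $A$ (Lemma \ref{lem g}(5)). Since $\{a, b, \gs\}$ spans $A$ by Lemma \ref{lem g}(4) and $\dim A = 3$, it is a basis. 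For the converse direction, one takes an abstract algebra defined by the given multiplication table and verifies in the basis $\{a, b, \gs\}$ that $L_a$ has eigenvalues $1, 0, \gl$ with the stated primitive eigenspace decomposition and that Jordan fusion $A_{\gl,\gl}\cdot A_{\gl,\gl} \subseteq \ff a + A_0$ holds; this reduces to a small matrix calculation.

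For part (ii), suppose $\gc = 0$. Then $a\gs = b\gs = \gs^2 = 0$, so $\ff\gs$ is a two-sided ideal annihilated by $A$. The formulas of Lemma \ref{lem g}(1), specialized with $\gd = \gl$ and $\gd' = \gl'$, force $b_0$ and $a_0$ to lie in $\ff\gs$, so in the two-dimensional commutative quotient $\bar A := A/\ff\gs$ the images $\bar b_0$ and $\bar a_0$ vanish while $\bar b_{\gl,\gl}$ and $\bar a_{\gl',\gl'}$ survive; a direct fusion check confirms that $\bar a, \bar b$ remain Jordan-type axes in $\bar A$. Theorem \ref{thm dim 2} then places $\bar A$ in the HSS list, which forces $\gl = \gl'$. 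Part (iii) is immediate: for $\gc \ne 0$ the relations $\gs \cdot x = \gc x$ for $x \in \{a,b,\gs\}$ show $\mathbf{1} := \gs/\gc$ acts as a two-sided identity on the basis, hence on $A$; the dimension statement $\dim A_0(a) = 1$ follows from $b_0 \ne 0$ (Lemma \ref{lem g}(3)) combined with $\dim A = 1 + \dim A_0(a) + \dim A_{\gl,\gl}(a) = 3$ and $A_{\gl,\gl}(a) \ne 0$ (Lemma \ref{lem g}(2)), and similarly for $b$.

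For part (iv), idempotency of $\mathbf{1} - a$ is immediate, and since $L_{\mathbf{1}-a} = I - L_a$, its eigenvalues are $0, 1, 1-\gl$ with the $1$-eigenspace equal to $A_0(a)$. Using the explicit description of $\gs$ from Lemma \ref{lem g}(1) one sees that $\mathbf{1} - a$ is a nonzero element of $A_0(a)$ (since $a(\mathbf{1} - a) = 0$), which is one-dimensional by (iii), so $\mathbf{1} - a$ is absolutely primitive; Jordan fusion for $\mathbf{1} - a$ is transported from that for $a$ via the eigenvalue swap $0 \leftrightarrow 1$. For part (v), observe that $A'(a, b) = \lan a, a^{\gt_b} \ran$ is generated by two axes of Jordan type $(\gl, \gl)$, since the Miyamoto involution $\gt_b$ is an algebra automorphism that preserves axis type. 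If $\dim A'(a, b) = 2$, then Theorem \ref{thm dim 2} puts $A'(a, b)$ in the commutative HSS list and thereby constrains $\gl$; a case analysis comparing $A'(a, b)$ against the subalgebra generated by the complementary axis $\mathbf{1} - a$ (of type $(1-\gl, 1-\gl)$, by (iv)) and its conjugate $(\mathbf{1} - a)^{\gt_b}$ yields the dichotomy $\gl = 1/2$ or $A = A'(\mathbf{1} - a, b)$.

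Finally, for part (vi), we split on $\gc$. If $\gc = 0$, then (ii) gives $\gl = \gl'$ and Proposition \ref{prop gs}(5) identifies $A$ with the PJ algebra $B(\gl, \gvp)$ of \cite{HRS}. If $\gc \ne 0$, apply (v): either $\dim A'(a, b) = 3$, so $A$ is generated by the two type-$(\gl, \gl)$ axes $a, a^{\gt_b}$; or $\dim A'(a, b) = 2$ and $A = A'(\mathbf{1} - a, b)$ is generated by the two type-$(1-\gl, 1-\gl)$ axes $\mathbf{1} - a, (\mathbf{1} - a)^{\gt_b}$; the remaining subcase $\gl = 1/2$ is handled symmetrically by swapping the roles of $a$ and $b$. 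In each case $A$ is generated by two Jordan-type axes of equal parameter, so Proposition \ref{prop gs}(5) concludes that $A$ is a PJ. The main obstacle I anticipate is part (v): the clean dichotomy $\gl = 1/2$ vs.\ $A = A'(\mathbf{1} - a, b)$ requires correlating the HSS classification of the 2-dimensional subalgebra $A'(a, b)$ with the geometry of the complementary axis $\mathbf{1} - a$ inside the 3-dimensional algebra $A$, and this bookkeeping is the most delicate step of the whole proof.
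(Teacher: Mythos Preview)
Your plan is sound and tracks the paper's proof closely for parts (i), (iii), (iv), (v), and (vi). The one genuine methodological difference is in part (ii): the paper argues directly that when $\gc=0$ one has $a\gs=0$, computes $a(ab)$ two ways (once via the table, once via Lemma~\ref{lem basic}(3)) to obtain $\gl'=\ga_b(1-\gl)$, and then combines this with the symmetric relation and Proposition~\ref{prop gs}(4) to get $2\gl'=2\gl$. Your quotient-by-$\ff\gs$ argument is a legitimate alternative---since $\gs$ lies in the even part of the $\zz_2$-grading for both $a$ and $b$, the grading (hence the fusion rules and absolute primitivity) descends, and $\bar a\bar b=\gl'\bar a+\gl\bar b\ne 0$ keeps you out of the $2B$ case, so the $2$-dimensional classification forces $\gl=\gl'$. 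This is slightly more conceptual but requires the extra verification that axis structure passes to the quotient; the paper's computation is shorter.

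For part (v) your sketch matches the paper's strategy exactly: invoke the HSS classification of $2$-dimensional commutative PJ algebras on $A'(a,b)=\langle a,a^{\gt_b}\rangle$ to pin $\gl\in\{1/2,-1\}$ or force $aa^{\gt_b}=0$, then in the latter cases use the complementary axis $\mathbf{1}-a$ (of type $1-\gl$) to recover a $3$-dimensional generating pair. You are right that this is the most delicate bookkeeping; the paper's own argument here is terse, so when you execute it be careful with the subcase $a^{\gt_b}=\mathbf{1}-a$, where primitivity of $\mathbf{1}-a$ is not immediately contradicted and one must instead check that $A'(\mathbf{1}-a,b)$ genuinely has dimension~$3$.
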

 \begin{proof}
(i)  The first part is Proposition \ref{prop gs}(2,3,5).
Conversely, we have
\[
(L_a-1)L_a(\gs) =  (L_a -1)(\gc a) = 0,
\]
of course $L_a-1L_a(a)=0,$ and
\[
(L_a-1)L_a(L_a-\gl)(b)=(L_a-1)L_a(ab-\gl b)=(L_a-1)L_a(\gs-\gl' a)=0,
\]
So $a$ is an axis, and, by symmetry, so is $b$.
Clearly $a,b$ generate $A,$ so $A$ is as claimed.
\medskip

(ii)
Suppose $\gc =0,$ then $\gs x=0,$ for all $x\in A.$ We have
\[
ab-\gs=\gl' a+\gl b \implies a(ab)=\gl' a+\gl ab.
\]
by Lemma \ref{lem basic}(3), $ a(ab)=\ga_b (1-\gl) a+\gl ab.$  It follows that
$\gl'= \ga_b (1-\gl).$  By symmetry, $\gl= \ga_a (1-\gl').$  But now
Proposition \ref{prop gs}(4) shows that $2\gl'=2\gl.$
\medskip

(iii)
If $\dim (A_0) =2$ then $A = A_0 +\ff a,$ implying $ab \in \ff
a,$  contrary to~Lemma~\ref{lem ab notin ffa}.   By symmetry $\dim (A_0(b))=1.$
\medskip

(iv) This is easy to check.
\medskip

(v)
Suppose that $\dim (A'(a,b))=2,$ and $\gl\ne\half.$  By
\cite[Lemma 3.1.2, p.~269]{HSS} we either have $aa^{\gt_b}=0,$
or $\gl=-1.$  In the first case we have $({\bf 1}-a)a^{\gt_b}=a^{\gt_b},$
contradicting the primitivity of ${\bf 1}-a.$  In the second case,
if $\charc(\ff)=3,$ then $-1=\half,$ otherwise
we get that ${\bf 1}-a$ is an axis of type $(2,2),$
and then necessarily $\dim A'({\bf 1}-a, b)=3.$
\medskip

(vi)
By Proposition \ref{prop gs}(5) it suffices
to show that $A$ is generated by two axes of the same Jordan type.
Note that $A'(c,d)$ is such an algebra for axes $c, d\in A.$
If $A'(a,b)=A,$ we are done.  Otherwise, by (v), $\gl=\half.$
Similarly, considering $A'(b,a),$ we are done unless $\gl'=\half.$
But now $\gl=\gl'=\half.$
\medskip
\end{proof}

We can finally prove

\begin{thm}\label{thm 2-gen}
Either $A$ is an
axial algebra of Jordan type, as in \cite{HSS},
or $A$ is as in one of the two algebras in Examples \ref{flex}.
In particular, $A$ is flexible.
\end{thm}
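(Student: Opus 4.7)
The plan is to split by dimension, then by commutativity, invoking the propositions already proved in this section.

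First, by Lemma \ref{lem g}(4), $A$ is spanned by $a$, $b$, and $ab$, so $\dim(A) \le 3$. If $\dim(A) \le 2$, then Theorem \ref{thm dim 2} applies: $A$ is either one of the $2$-generated commutative primitive axial algebras of Jordan type listed in \cite[Lemma 3.1.2]{HSS}, or the non-commutative algebra of Example \ref{flex}(i). In both cases flexibility is known — for the commutative algebras it is automatic by Definitions \ref{defs main20}(1), and for Example \ref{flex}(i) it was verified directly in the example.

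It remains to handle the case $\dim(A) = 3$. By Lemma \ref{lem g}(5), we are in exactly one of two situations: either $\gl \ne \gd$ and $\gl' \ne \gd'$, in which case $A$ is non-commutative, or $\gl = \gd$ and $\gl' = \gd'$, in which case $A$ is commutative. In the non-commutative case, Proposition \ref{prop nc} shows directly that $A$ is isomorphic to the algebra of Example \ref{flex}(ii), whose flexibility was again verified in the example itself.

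For the remaining commutative case with $\dim(A) = 3$ and $\gl = \gd$, $\gl' = \gd'$, Proposition \ref{prop c}(vi) yields that $A$ is a primitive axial algebra of Jordan type in the sense of \cite{HRS}, hence one of those classified in \cite{HSS}; commutativity then gives flexibility for free. This exhausts all cases, so $A$ is either an axial algebra of Jordan type as in \cite{HSS} or one of the two algebras of Examples \ref{flex}, and in every case $A$ is flexible. The only point that required any real work was the commutative $3$-dimensional case, which is handled by the multiplication table and the unit element analysis carried out in Proposition \ref{prop c}.
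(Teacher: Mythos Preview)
Your proof is correct and follows essentially the same route as the paper: bound $\dim(A)\le 3$ via Lemma~\ref{lem g}(4), dispose of $\dim(A)=2$ by Theorem~\ref{thm dim 2}, and in dimension~$3$ use the dichotomy of Lemma~\ref{lem g}(5) together with Propositions~\ref{prop nc} and~\ref{prop c}(vi). The only difference is that you spell out the flexibility claim in each branch, which the paper leaves implicit; one tiny quibble is that Theorem~\ref{thm dim 2} is stated for $\dim(A)=2$, so the degenerate case $a=b$ (giving $A=\ff a$) should strictly be noted separately, though it is trivially a commutative PJ.
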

\begin{proof}
If $\dim (A)=2,$ then this is Proposition \ref{thm dim 2}.
Hence, by Lemma \ref{lem g}, we may assume that $\dim(A)=3.$
Now Lemma \ref{lem g}(5) together with Propositions \ref{prop nc}
and \ref{prop c} complete the proof.

\end{proof}

\end{document}